\definecolor{upbgray}{rgb}{0.780392156863, 0.788235294118, 0.780392156863}
\definecolor{upbblue}{rgb}{0, 0.125490196078, 0.3568627451}
\definecolor{upbblue2}{rgb}{0, 0.623529412, 0.874509804}
\DeclareMathAlphabet{\mathpzc}{OT1}{pzc}{m}{it}
\newcommand{\R}{\mathbb{R}}
\newcommand{\cB}{{\mathcal{B}}}
\newcommand{\setdist}[1]{\textrm{dist}\left(#1\right)}
\providecommand{\abs}[1]{\left\lvert #1 \right\rvert}
\providecommand{\norm}[1]{\left\lVert #1 \right\rVert}
\DeclareMathOperator{\diam}{diam}
\newtheorem{theorem}{Theorem}[section]
\newtheorem{lemma}[theorem]{Lemma}
\newtheorem{proposition}[theorem]{Proposition}
\newtheorem{definition}[theorem]{Definition}
\newtheorem{remark}[theorem]{Remark}
\renewcommand*\env@matrix[1][*\c@MaxMatrixCols c]{%
	\hskip -\arraycolsep
	\let\@ifnextchar\new@ifnextchar
	\array{#1}}
\newcommand{\old}[1]{\textcolor{green}{{ }}}
\begin{document}
\title{A set-oriented path following method for the approximation of parameter dependent attractors}

\author[1]{Raphael Gerlach}
\author[1]{Adrian Ziessler}
\author[2]{Bruno Eckhardt}
\author[1]{Michael Dellnitz}
\affil[1]{\normalsize Department of Mathematics, Paderborn University, 33098 Paderborn, Germany.}
\affil[2]{\normalsize Fachbereich Physik, Philipps-Universit\"{a}t Marburg, 35032 Marburg, Germany.}
	
	\maketitle
	
	\begin{abstract}
		In this work we present a set-oriented path following method for the computation of relative global attractors of parameter-dependent dynamical systems. We start with an initial approximation of the relative global attractor for a fixed parameter $\lambda_0$ computed by a set-oriented subdivision method. By using previously obtained approximations of the parameter-dependent relative global attractor we can track it with respect to a one-dimensional parameter $\lambda > \lambda_0$ without restarting the whole subdivision procedure. We illustrate the feasibility of the set-oriented path following method by exploring the dynamics in models for shear flows during the transition to turbulence.
	\end{abstract}
		
	\section{Introduction}
	
	Over the last two decades so-called \emph{set-oriented numerical methods} have been developed in the context of the numerical treatment of finite dimensional dynamical systems (e.g.,~\cite{DH96,DH97, DJ99, FD03, FLS10}). Here, the basic idea is to cover the objects of interest such as \emph{attractors}, \emph{invariant manifolds} or \emph{almost invariant sets} by outer approximations which are created via \emph{subdivision} and \emph{continuation} techniques. The numerical effort depends essentially on the dimension of the global attractor, i.e., it is easier to compute a one-dimensional attractor in a ten-dimensional space than to compute a three-dimensional attractor in a four-dimensional space \cite{DH97}. The set-oriented techniques have been used successfully in several different application areas such as molecular dynamics (\cite{DDO+99,SHD01,DGM+05}), astrodynamics (\cite{DJLMPPRT05,DJK+05,DJ05}) and ocean dynamics (\cite{FHRSSG12}). Moreover, a set-oriented numerical framework has recently been developed which allows to perform uncertainty quantification for dynamical systems from a global point of view \cite{DKZ17} and the computation of finite dimensional attractors or unstable manifolds of infinite dimensional dynamical systems \cite{DHZ16,ZDG18,Zie18}. In this work we connect these set-oriented algorithms with ideas from bifurcation analysis and path following methods in order to efficiently treat parameter dependencies in the underlying dynamical system.
	
	Systems of physical interest typically depend on parameters appearing in the defining systems of equations, i.e., one considers dynamical systems of the form
	\begin{equation}\label{eq:ODE}
	\dot x = F(x,\lambda),
	\end{equation}
	where $x \in \R^n$ are the variables, $\lambda \in \R^p$ are the system parameters, and $F:\R^n\times \R^p \to \R^n$ is assumed to be sufficiently smooth. By varying $\lambda$, the qualitative structure of the solutions might change significantly, e.g., stable equilibria of the dynamical system might become unstable. Such phenomena are known as \emph{bifurcations} and the parameter values are called \emph{bifurcation parameters} (see~\cite{GH83,HK12,GSS12,Wig13} for a detailed overview). 
	
	Bifurcations typically come in two kinds. On the one hand, one can look at \emph{local bifurcations} \cite{GH83,GSS12}, where the analysis of such bifurcations is generally performed by studying the corresponding vector fields near equilibrium points. That is, one can usually reduce the problem to the analysis of an equation of the form
	\begin{equation}\label{eq:local_f}
	F(x,\lambda) = 0.
	\end{equation}
	Starting with an initial $\lambda_0$ the aim is to find numerically an approximation of a solution curve depending on $\lambda$ which is implicitly defined by 
	\eqref{eq:local_f}. The basic idea of path following methods which tackle this kind of problem for parameter-dependent fixed points and steady states can be found, e.g., in \cite{AG93} and \cite{AG97}. For the numerical treatment of bifurcation problems there exist a wide range of software tools, e.g., the well-known software package \texttt{AUTO} \cite{Doe81}. A first-version of \texttt{AUTO} has already been developed in 1981. More recent contributions to bifurcation analysis software are the \texttt{MATLAB} packages \texttt{MATCONT} \cite{DGK03} and \texttt{COCO} \cite{DS13}.
	
	On the other hand, there are \emph{global bifurcations} \cite{GS03,Wig13} for which it is not sufficient to reduce the study to a neighborhood of an equilibrium or a fixed point. For example, these bifurcations occur when invariant sets, such as periodic orbits, collide with equilibria. Therefore, typical examples are formations of homoclinic and heteroclinic orbits which can also be found numerically with the software package \texttt{AUTO} \cite{Doe81} and \texttt{COCO} by formulating and solving an appropriate boundary value problem (e.g., \cite{DF89}). In addition, \texttt{MATCONT} is able to continue homoclinic and heteroclinic orbits by a homotopy method \cite{WGK+12}.
	
	In this work we extend the subdivision method for the approximation of the so--called \emph{relative global attractor} to parameter-dependent dynamical systems, i.e., we design a path following method from a global point of view, which indirectly allows the numerical analysis of global bifurcations. 
	Note that the relative global attractor contains all backward invariant sets and, in particular, the homoclinic or heteroclinic connections. Using previously obtained approximations of the relative global attractor we can track it with respect to a one-dimensional parameter without restarting the whole subdivision procedure. Since one computation of an attracting set might be very expensive by using the set-oriented method, the algorithm developed in this article can reduce the overall computational cost significantly. The set-oriented path following algorithm has been implemented in the dynamical systems software package \texttt{GAIO} \emph{(Global Analysis of Invariant Objects)} \cite{DFJ01}, which is available for \texttt{MATLAB} (see~https://github.com/gaioguy/GAIO).
	
	We will use the set-oriented path following method to explore the dynamics of shear flows during the transition to turbulence. In contrast to convection or Taylor-Couette flows, where the transition can be captured by sequences of bifurcations that branch off the laminar state \cite{Chandrasekhar61,Chossat94}, shear flows such as pipe flow or plane Couette flow do not show a linear instability of the laminar profile \cite{Schmid99,Grossmann:2000}. The transition is then mediated by the presence of 3d states that form in saddle-node bifurcations \cite{Nagata:1990ae,Eckhardt:2007ka,Eckhardt:2007ix,Eckhardt:2018js}. Very close to the bifurcation, the states of the upper branches can be stable, but they soon undergo secondary bifurcations and period doubling cascades \cite{Kreilos:2012bd,Avila:2013jq,Zammert:2015jg}. Crisis bifurcations destroy the attractors and give rise to the open saddles that underlie the observed transience of the turbulent state \cite{Hof:2006ab,Kreilos:2014ew}.
	
	Exploring the dynamics in the full phase space is very complicated because of the high-dimensionality of the relevant subspaces, though notable progress has been made in some cases \cite{Halcrow:2009jx,Gibson:2009kp}. In order to prepare for the application to such computationally expensive examples, we begin here with an analysis of simple models that capture much of the phenomenology of shear flows: they have a laminar state that is linearly stable and a saddle-node bifurcation that represents the transition to turbulence. The models even contain the transition from an attractor to a transient saddle.
	
	The simplest model where the modes have a physical interpretation is the four-dimensional model proposed by Waleffe \cite{Wal95a}: the four components represent the transversal velocity components, the vortices, the streak, and the mean velocity. With the methods described here, we are able to trace the appearance of the attractor and its growth as the Reynolds number increases. We will then extend the analysis to the case of the nine-dimensional model discussed in \cite{MFE04,MFE05}.
	
	A detailed outline of the paper is as follows: in Section~\ref{sec:review} we briefly summarize the subdivision method introduced in \cite{DH97}. In Section~\ref{sec:path} we then develop the set-oriented path following method which allows us to compute attracting sets of parameter-dependent dynamical systems without restarting the subdivision procedure. In Section~\ref{sec:numerical_examples} we illustrate the efficiency of the path following method for two different reduced order models from fluid dynamics, namely, a four-dimensional model of self-sustained flows and a nine-dimensional model of turbulent shear flows. 
	Conclusions are given in Section~\ref{sec:conclusion}.
	
	\section{A set-oriented path following method}\label{sec:rel_A}
	We consider parameter dependent dynamical systems of the form
	\begin{equation}\label{eq:DS}
	x_{j+1} = f(x_j, \lambda),\quad j=0,1,\ldots,
	\end{equation}
	where $x_j \in \R^n$ and $f:\R^n \times \Lambda \rightarrow \R^n$ is a homeomorphism for each parameter $\lambda$ in an open subset $\Lambda \subset \R$ and uniformly continuous in $\lambda$ on bounded sets, e.g., $f$ could be a time-$T$-map for the system \eqref{eq:ODE}. In what follows, for fixed $\lambda \in \Lambda$ we use the abbreviation $f_\lambda(x):=f(x, \lambda)$. Moreover, we assume that $f_{\lambda}$ has a compact global attractor $A^\lambda\subset \R^n$ for each $\lambda\in \Lambda$, that is, the compact set $A^\lambda\subset \R^n$ attracts any bounded set $B\subset \R^n$.
	Later on, we will additionally assume that $A^\lambda$ is upper semi-continuous in $\lambda$. 
	
	The aim of this work is to develop a set-oriented path following method for the approximation of $A^\lambda$ depending on the parameter $\lambda$.
	Since this  method is based on the framework developed in \cite{DH97} we start with a short review of the related material.
	
	\subsection{Brief review of the subdivision method}\label{sec:review}
	Let $Q \subset \R^k$ be a compact set. For a homeomorphism $g:\R^n\to \R^n$ we define the \emph{global attractor relative to} $Q$ by
	\begin{equation}
	\label{eq:relativeAttractor}
	A_Q = \bigcap_{j\ge 0} g^j(Q).
	\end{equation}
	Observe that $A_Q=A$ if $A \subset Q$ is the compact global attractor of $g$.
	
	As a first step we approximate this set with a subdivision procedure introduced in \cite{DH97} for a fixed $\lambda_0\in\Lambda$, that is, $g=f_{\lambda_0}$.
	Roughly speaking, the idea of the algorithm is to start with a finite family  $\cB_0$ of compact subsets of $\R^n$ that are a partition of $Q$.
	Then we refine this partition and remove those subsets that do not contain parts of the relative global attractor. By continuing this process we generate a sequence $\cB_0,\cB_1,\ldots$ of finite collections of compact subsets of $\R^n$ such that the diameter
	\[
	\diam(\cB_k) = \max_{B\in\cB_k}\diam(B)
	\]
	converges to zero and the union $\bigcup_{B\in B_k} B$ approaches the relative global attractor $A_Q$ for $k\rightarrow\infty$. 
	
	More precisely, let $\cB_0$ be the initial collection, e.g., $\cB_0 = \{Q\}$. Then we inductively obtain $\cB_k$ from $\cB_{k-1}$
	for $k=1,2,\ldots$ in two steps.
	
	\begin{enumerate}
		\item {\em Subdivision:} Construct a new collection
		$\widehat\cB_k$ such that
		\begin{equation*}
			\label{eq:sd1}
			\bigcup\limits_{B\in\widehat\cB_k}B = \bigcup\limits_{B\in\cB_{k-1}}B
		\end{equation*}
		and
		\begin{equation*}
			\label{eq:sd2}
			\diam(\widehat\cB_k) = \theta\diam(\cB_{k-1}),
		\end{equation*}
		where $0<\theta< 1$.
		\item {\em Selection:} Define the new collection $\cB_k$ by
		\begin{equation*}
			\label{eq:select}
			\cB_k=\left\{B\in\widehat\cB_k : \exists \widehat B\in\widehat\cB_k
			~\mbox{such that}~g^{-1}(B)\cap\widehat B\ne\emptyset\right\}.
		\end{equation*}
	\end{enumerate}
	The first step guarantees that the collections $\cB_k$ consist of
	successively finer sets for increasing $k$.  In fact, by construction
	\begin{equation*}\label{eq:diamB}
		\diam(\cB_k)\leq\theta^k\diam(\cB_0)\rightarrow 0\quad
		\mbox{for $k\rightarrow\infty$.}
	\end{equation*}

	In the second step we remove each subset whose preimage does neither
	intersect itself nor any other subset in $\widehat\cB_k$. 
	
	\begin{remark}
		In the application of the subdivision scheme for the computation of the relative global attractor we have to perform the selection step
		\begin{equation*}
			\cB_k=\left\{B\in\widehat\cB_k : \exists \widehat B\in\widehat\cB_k
			~\mbox{such that}~g^{-1}(B)\cap\widehat B\ne\emptyset\right\}.
		\end{equation*}
		Thus, we have to decide whether or not the preimage of a given set $B_i \in \cB_k$ has a nonzero intersection with another set $B_j \in \widehat{\cB_k}$, i.e.,
		\begin{align}\label{eq:nonzeroIntersectionBoxes}
			g^{-1}(B_i)\cap\widehat B_j\ne\emptyset.
		\end{align}
		Numerically this is realized as follows: we discretize each box $\widehat{B}_j$ by a finite set of test points $x \in \widehat{B}_j$ and replace the condition \eqref{eq:nonzeroIntersectionBoxes} by
		\begin{align*}
			g(x) \notin B_i \quad \mbox{for all test points } x \in \widehat{B}_j.
		\end{align*}
	\end{remark}
	
	The selection step is responsible for the fact that the unions
	$\bigcup_{B\in\cB_k}B$ approach the relative global attractor.
	Denote by $Q_k$ the collection of compact sets obtained after $k$ subdivision steps, that is
	\begin{equation}\label{eq:Qk}
	Q_k=\bigcup_{B\in \cB_k} B_.
	\end{equation}
	
	Note that these sets form a nested sequence $Q_{k+1}\subset Q_k$. Therefore the limit
	\[
	Q_\infty = \bigcap_{k=0}^\infty Q_k
	\]
	exists and the following result has been proven in \cite{DH97}:
	
	\begin{proposition}\label{prop:convergence_subdivision}
		Let $A_Q$ be the global attractor relative to $Q$, and let $\cB_0$ be a finite collection of closed subset with $Q_0=\bigcup_{B\in \cB_0} =Q$. 
		Then the sets $Q_k$ obtained by the subdivision algorithm contain the relative global attractor, $A_Q\subset Q_k$ for all $k=0,1,\ldots$, and moreover
		\[
		A_Q=Q_\infty.
		\]
	\end{proposition}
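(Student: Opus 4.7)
The proposition makes two claims: (a) $A_Q \subset Q_k$ for every $k \geq 0$, and (b) $A_Q = Q_\infty$. My plan is to prove (a) by induction using a backward invariance property of $A_Q$, to deduce from (a) that $A_Q \subset Q_\infty$, and finally to establish the reverse inclusion using the contraction of the box diameters together with the selection rule.

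The cornerstone is the observation that $g^{-1}(A_Q) \subset A_Q$. Since $g$ is a homeomorphism and hence bijective,
\[
g(A_Q) = g\Bigl(\bigcap_{j\geq 0} g^j(Q)\Bigr) = \bigcap_{j\geq 1} g^j(Q) \supset A_Q,
\]
which is equivalent to $g^{-1}(A_Q) \subset A_Q$. For (a), the base case $A_Q \subset Q_0 = Q$ is immediate from the definition \eqref{eq:relativeAttractor}. Assuming $A_Q \subset Q_{k-1}$, the subdivision step preserves the union, so $A_Q \subset \bigcup_{B\in\widehat{\cB}_k} B$. Fix $x \in A_Q$ and pick $B \in \widehat{\cB}_k$ with $x \in B$. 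To see $B$ survives selection, use backward invariance: $g^{-1}(x) \in A_Q \subset Q_{k-1} = \bigcup_{B\in\widehat{\cB}_k} B$, so there exists $\widehat B \in \widehat{\cB}_k$ with $g^{-1}(x) \in \widehat B$, whence $g^{-1}(x) \in g^{-1}(B) \cap \widehat B \neq \emptyset$. Thus $B \in \cB_k$ and $x \in Q_k$.

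For (b), the inclusion $A_Q \subset Q_\infty$ follows from (a) by intersecting over $k$. For the converse, let $x \in Q_\infty$ and for each $k$ choose $B_k \in \cB_k$ with $x \in B_k$. The selection rule yields some $\widehat B_k \in \widehat{\cB}_k$ and some $y_k \in \widehat B_k \subset Q_{k-1}$ with $g(y_k) \in B_k$. Since $\diam(B_k) \leq \theta^k \diam(\cB_0) \to 0$, we get $g(y_k) \to x$, and continuity of $g^{-1}$ gives $y_k \to g^{-1}(x)$. By the nesting $Q_{k+1} \subset Q_k$, for any fixed $j$ the tail $(y_k)_{k \geq j+1}$ lies in the closed set $Q_j$, so $g^{-1}(x) \in Q_j$ for every $j$, i.e., $g^{-1}(x) \in Q_\infty$. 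Hence $g(Q_\infty) \supset Q_\infty$, and iterating this relation gives $Q_\infty \subset g^j(Q_\infty) \subset g^j(Q)$ for every $j \geq 0$, so $Q_\infty \subset A_Q$.

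The main obstacle will be the limit argument in (b): one must carefully justify that the candidate preimage $g^{-1}(x)$ really lies in \emph{every} $Q_j$. This rests on two ingredients that are easy to overlook, namely the closedness of each $Q_j$ (inherited from the fact that it is a finite union of compact boxes) and the nested structure $Q_{k+1}\subset Q_k$ which forces the tails of the sequence $(y_k)$ into $Q_j$ regardless of how large $j$ is chosen.
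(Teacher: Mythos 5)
Your proof is correct. The paper does not reprove this result but defers to \cite{DH97}, and your argument --- establishing $g^{-1}(A_Q)\subset A_Q$, proving $A_Q\subset Q_k$ by induction via the selection criterion, and then using the points $y_k\in\widehat B_k$ with $g(y_k)\in B_k$, the shrinking diameters, and the closedness and nesting of the $Q_j$ to get $Q_\infty\subset g(Q_\infty)$ and hence $Q_\infty\subset g^j(Q)$ for all $j$ --- is essentially the original proof given in that reference.
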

	
	\subsection{Path following method for the approximation of relative global attractors}\label{sec:path}
	In this section we develop a path following algorithm that allows to compute the relative global attractor for various parameter values $\lambda \in \Lambda$ of \eqref{eq:DS} by reusing previously obtained coverings. The idea of this method is to first approximate the relative global attractor $A_Q^{\lambda_0}$ for $\lambda_0 \in \Lambda$ and then to use a covering of this set, denoted by $Q_k^{\lambda_0}$, as a starting point to compute the relative global attractor $A_Q^{\lambda_1}$ for $\lambda_1\in \Lambda$ sufficiently close to $\lambda_0$.
	
	We start with the following observation.
	\begin{lemma}\label{lem:A_equal_to_AQ_for_arbitrary_Q}
		Let $f:\R^n \to \R^n$ be a homeomorphism and assume that $f$ has a compact global attractor $A \subset \R^n$. Furthermore, let $Q^1, Q^2$ be compact sets such that $A \subset Q^1 \subset Q^2$. Then,
		\begin{equation*}
			A_{Q^1} = A = A_{Q^2},
		\end{equation*}
		where $A_{Q^i}$ is the global attractor relative to $Q^i$ for $i=1,2$.
	\end{lemma}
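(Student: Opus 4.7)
The plan is to show the two inclusions $A \subset A_{Q^i}$ and $A_{Q^i} \subset A$ separately for each $i \in \{1,2\}$. Since the argument will use nothing about $Q^i$ beyond the fact that it is compact and contains $A$, both claimed equalities fall out at once. The only inputs are the two structural properties of a compact global attractor: strict invariance $f(A)=A$ and attraction of every bounded subset of $\R^n$.

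For the lower inclusion $A \subset A_{Q^i}$, I would combine strict invariance with the fact that $f$ is a homeomorphism to conclude, by induction, that $f^j(A)=A$ for every $j\geq 0$. Monotonicity of $f^j$ applied to the hypothesis $A \subset Q^i$ then yields $A = f^j(A) \subset f^j(Q^i)$, and intersecting over $j\geq 0$ gives $A \subset \bigcap_{j\geq 0} f^j(Q^i) = A_{Q^i}$.

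For the reverse inclusion $A_{Q^i} \subset A$, I would exploit the fact that $Q^i$ is bounded (being compact), so that the attraction property supplies, for every open neighborhood $U$ of $A$, an index $N \in \N$ with $f^j(Q^i) \subset U$ whenever $j\geq N$. Hence $A_{Q^i} \subset \bigcap_{j\geq N} f^j(Q^i) \subset U$, and intersecting over a neighborhood basis of the closed set $A$ produces $A_{Q^i} \subset A$. I do not anticipate a genuine obstacle; the only care needed is to invoke both full invariance (for the first inclusion) and attraction of bounded sets (for the second), both of which are standard parts of the notion of a compact global attractor as used in \cite{DH97}.
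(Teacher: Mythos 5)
Your proof is correct and, in substance, is the same argument the paper has in mind: the paper's own proof is a one-line appeal to the definition of $A_{Q^i}$ (and to the observation, stated just after \eqref{eq:relativeAttractor}, that $A_Q=A$ whenever $A\subset Q$), and your two inclusions --- invariance giving $A\subset\bigcap_{j\ge 0}f^j(Q^i)$, and attraction of the bounded set $Q^i$ giving the reverse --- are exactly the details being suppressed there. Nothing is missing; you have simply written out what the paper leaves implicit.
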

	\begin{proof}
		This statement follows directly by the definition of the global attractor relative to a compact set $Q$ (cf.~\eqref{eq:relativeAttractor}) and the fact that $A \subset Q^1 \subset Q^2$.
	\end{proof}
	
	As a direct consequence we obtain the following result.
	\begin{proposition}\label{prop:convergence_boxCoverings}
		Let $f:\R^n \to \R^n$ be a homeomorphism and assume that $f$ has a compact global attractor $A \subset \R^n$. Let $Q^1, Q^2$ be compact sets such that $A \subset Q^1 \subset Q^2$ and denote by $Q_k^i$, $i = 1,2$, the corresponding sets obtained after $k$ subdivision and selection steps (see~\eqref{eq:Qk}). Then
		\begin{equation*}
			\lim_{k \to \infty} h(Q_k^1, Q_k^2) = 0,
		\end{equation*}
		where $h(B,C)$ denotes the usual Hausdorff distance between two compact subsets $B,C \subset \R^n$.
	\end{proposition}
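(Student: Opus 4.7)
The plan is to reduce everything to the one--set convergence result of Proposition~\ref{prop:convergence_subdivision} and then close up with a triangle inequality on the Hausdorff distance. First I would invoke Lemma~\ref{lem:A_equal_to_AQ_for_arbitrary_Q} to observe that the two relative global attractors coincide with the genuine global attractor, i.e.\ $A_{Q^1}=A=A_{Q^2}$. Next, Proposition~\ref{prop:convergence_subdivision} applied separately to $Q^1$ and to $Q^2$ yields
\begin{equation*}
\bigcap_{k\ge 0} Q_k^1 \;=\; A_{Q^1} \;=\; A \;=\; A_{Q^2} \;=\; \bigcap_{k\ge 0} Q_k^2,
\end{equation*}
together with the inclusions $A\subset Q_k^i$ for every $k$ and $i=1,2$.

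The main technical step is then to upgrade these set-theoretic identities to Hausdorff convergence $h(Q_k^i,A)\to 0$. Since $A\subset Q_k^i$, one side of the Hausdorff distance, $\sup_{a\in A}\mathrm{dist}(a,Q_k^i)$, is already zero. For the other side I would argue by contradiction: if $\sup_{x\in Q_k^i}\mathrm{dist}(x,A)\not\to 0$, pick $\varepsilon>0$ and a subsequence $x_{k_j}\in Q_{k_j}^i$ with $\mathrm{dist}(x_{k_j},A)\ge \varepsilon$. All the $x_{k_j}$ lie in the compact set $Q_0^i=Q^i$, so along a further subsequence $x_{k_j}\to x_\infty$. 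Using the nestedness $Q_{k+1}^i\subset Q_k^i$ from the subdivision step, the tail of the sequence sits in every $Q_k^i$, hence $x_\infty\in \bigcap_k Q_k^i=A$, contradicting $\mathrm{dist}(x_\infty,A)\ge\varepsilon$. This is the only place where a genuine argument is needed; it is a standard ``nested compact sets converge in Hausdorff metric to their intersection'' lemma, but it does rely essentially on compactness of $Q^i$ and on the nestedness guaranteed by the subdivision construction.

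Finally I would conclude by the triangle inequality for the Hausdorff distance,
\begin{equation*}
h(Q_k^1,Q_k^2) \;\le\; h(Q_k^1,A) + h(A,Q_k^2) \;\longrightarrow\; 0 \quad \text{as } k\to\infty,
\end{equation*}
which is the desired statement. The main obstacle I expect is exactly the contradiction argument above: one has to be careful that the limit $x_\infty$ really does belong to $\bigcap_k Q_k^i$, which requires both the nestedness of the $Q_k^i$ and the closedness of each $Q_k^i$ (a finite union of compact boxes). Everything else is an immediate consequence of the earlier lemma and proposition.
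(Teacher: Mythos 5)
Your proof is correct and follows the same route as the paper, which simply states that the result is a direct consequence of Proposition~\ref{prop:convergence_subdivision} and Lemma~\ref{lem:A_equal_to_AQ_for_arbitrary_Q}. You merely make explicit the step the paper leaves implicit, namely that the nested compact sets $Q_k^i$ converge in the Hausdorff metric to their intersection $A$, and your compactness/contradiction argument for that step, followed by the triangle inequality, is sound.
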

	\begin{proof}
		This result is a direct consequence of Proposition~\ref{prop:convergence_subdivision} and Lemma~\ref{lem:A_equal_to_AQ_for_arbitrary_Q}.
	\end{proof}
	
	Proposition~\ref{prop:convergence_boxCoverings} states that by performing sufficiently many subdivision and selection steps we obtain a good approximation of the attracting set $A$ no matter how we choose the initial covering $Q$ as long as $A \subset Q$.
	
	In what follows, let us assume that $Q\subset \R^n$ is a compact set such that $A^\lambda\subset Q$ for all $\lambda\in \Lambda$. We denote by $\mathcal{Q}(\lambda_0)$ a finite family of compact sets $Q_k^{\lambda_0}$, $k=0,1,\ldots,m$, which all contain the global attractor $A_Q^{\lambda_0}$ relative to $Q$ of the map $f_{\lambda_0}$. Next, either we fix $\lambda_1\in \Lambda$ and choose $K \leq m$ sufficiently small such that $A^{\lambda_1} \subset Q_K^{\lambda_0}$ or we fix $K\leq m$ and choose $\lambda_1 \in \Lambda$ sufficiently close to $\lambda_0$ such that $A^{\lambda_1} \subset Q_K^{\lambda_0}$. In any case Lemma~\ref{lem:A_equal_to_AQ_for_arbitrary_Q} and Proposition~\ref{prop:convergence_boxCoverings} allows us to approximate the global attractor $A_Q^{\lambda_1}$ of the map $f_{\lambda_1}$ using the initial covering $Q_K^{\lambda_0}$ with the subdivision algorithm described in the previous section. Observe that we can always choose $K=0$ for every $\lambda_1$, since $Q_0^{\lambda_0} = Q$ and $A_Q^{\lambda_1} \subset Q$ for $\lambda_1 \in \Lambda$ by assumption. 
	
	To discuss the choice of $\lambda_1$ for fixed $K$ we have to analyze the change of the attractor $A^{\lambda}$ when the parameter $\lambda$ is varied. To this end, we define the distance between two subsets $B,C$ of $\R^n$ as
	\[
	\setdist{B,C}=\sup_{x\in B}\inf_{y\in C} \norm{x-y}.
	\]
	This distance allows us to define the continuity of attractors as follows:
	
	\begin{definition}
		Let $\lambda_0\in \Lambda$. The family of attractors $A^\lambda$ is \emph{upper semi-continuous} at $\lambda_0$ if
		\begin{equation}\label{def:upper-semi-cont}
		\lim_{\lambda\to \lambda_0}\setdist{A^\lambda,A^{\lambda_0}}=0.
		\end{equation}
		The attractor $A^\lambda$ is called \emph{upper semi-continuous} if $A^\lambda$ is upper semi-continuous for each $\lambda\in \Lambda$.
	\end{definition}
	Note that upper semi-continuity at $\lambda_0$ implies that for every $\varepsilon>0$ there exists a neighborhood $U_\delta(\lambda_0)\subset  \Lambda$ of $\lambda_0$ such that $A^\lambda\subset U_\varepsilon(A^{\lambda_0})$ for all $\lambda \in U_\delta(\lambda_0)$, where $U_\varepsilon(A^{\lambda_0})\subset \R^n$ denotes the $\varepsilon$-neighborhood of $A^{\lambda_0}$. Thus, the attractor cannot become larger instantaneously by varying $\lambda$ which is a naturally needed property for our proposed path following scheme. In fact, if the attracting set suddenly ``explodes'' we cannot assure that any previously computed covering $Q_k^{\lambda}$ besides $k=0$ still covers $A^{\lambda_1}$. For the sake of completeness we note that \emph{lower semi-continuity} of $A^\lambda$, i.e.,
	\begin{equation*}
		\lim_{\lambda\to \lambda_0}\setdist{A^{\lambda_0},A^{\lambda}}=0 \quad \forall \lambda\in\Lambda.
	\end{equation*}
	prevents a sudden shrinking of $A^\lambda$ which holds for instance for gradient systems with hyperbolic fixed points \cite{HG89}. However, this would be, in general, no issue for our proposed algorithm.
	
	We are now in the position to prove that for each $K\in\{0,\ldots,m\}$ there is a range of parameters $\lambda\in \Lambda$ such that the attractor $A^\lambda$ can be approximated by the subdivision algorithm with the initial compact set $Q_K^{\lambda_0}$. 
	
	\begin{proposition}\label{prop:upper-semi}
		Let $\mathcal{Q}(\lambda_0)=\{Q_0^{\lambda_0},\ldots,Q_m^{\lambda_0}\}$ be the family of sets generated by the subdivision method
		such that $A_Q^{\lambda_0} = A^{\lambda_0}\subset Q_k^{\lambda_0}$ for all $k=0,\ldots,m$. Suppose that there is $\varepsilon>0$ such that $U_\varepsilon(A^{\lambda_0})\subset Q_k^{\lambda_0}$ for all $k=0,\ldots,m$ and $A^{\lambda}$ is upper semi-continuous at $\lambda_0$.
		Then for every $k=0,\ldots,m$ there exists $\delta=\delta(k)>0$ such that $A^\lambda\subset Q_k^{\lambda_0}$ for all $\lambda \in U_\delta(\lambda_0)$. In particular, $A^\lambda$ can be approximated by using the initial compact set $Q_k^{\lambda_0}$.
	\end{proposition}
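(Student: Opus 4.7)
The plan is to exploit the upper semi-continuity hypothesis directly: the neighborhood $U_\varepsilon(A^{\lambda_0})$ is assumed to sit inside every $Q_k^{\lambda_0}$, so it suffices to arrange that $A^\lambda$ is contained in this $\varepsilon$-neighborhood for $\lambda$ close enough to $\lambda_0$.

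First I would fix an index $k\in\{0,\ldots,m\}$ and unpack what upper semi-continuity of $A^\lambda$ at $\lambda_0$ gives, namely that $\lim_{\lambda\to\lambda_0}\setdist{A^\lambda,A^{\lambda_0}}=0$. Applied with the specific threshold $\varepsilon>0$ provided in the statement, this yields some $\delta=\delta(k)>0$ such that
\[
\setdist{A^\lambda,A^{\lambda_0}}<\varepsilon \quad \text{for all } \lambda\in U_\delta(\lambda_0).
\]
By the definition of $\setdist{\cdot,\cdot}$ this inclusion is equivalent to the set containment $A^\lambda\subset U_\varepsilon(A^{\lambda_0})$, since every point of $A^\lambda$ lies at distance strictly less than $\varepsilon$ from some point of $A^{\lambda_0}$.

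Chaining this with the standing assumption $U_\varepsilon(A^{\lambda_0})\subset Q_k^{\lambda_0}$ gives immediately $A^\lambda\subset Q_k^{\lambda_0}$ for all $\lambda\in U_\delta(\lambda_0)$, which is the first claim. For the second part, once $A^\lambda\subset Q_k^{\lambda_0}$ has been established, Lemma~\ref{lem:A_equal_to_AQ_for_arbitrary_Q} applied to the homeomorphism $f_\lambda$ with $Q^1=Q_k^{\lambda_0}$ and $Q^2=Q$ (noting $A^\lambda\subset Q_k^{\lambda_0}\subset Q$) shows that the global attractor of $f_\lambda$ relative to $Q_k^{\lambda_0}$ coincides with $A^\lambda$, and Proposition~\ref{prop:convergence_boxCoverings} then guarantees that the subdivision procedure initialized with $Q_k^{\lambda_0}$ produces coverings converging to $A^\lambda$ in Hausdorff distance.

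Since the whole argument is essentially a two-step set-inclusion chase, I do not anticipate any real obstacle; the only mildly delicate point is to be careful that the constant $\delta$ may depend on $k$ (it does, because a smaller $Q_k^{\lambda_0}$ can force a smaller useful $\varepsilon$, but in the present formulation $\varepsilon$ is already fixed uniformly in $k$ by hypothesis, so in fact a single $\delta$ works for all $k$ simultaneously). I would close with a brief remark that, while $\delta(k)$ is written as $k$-dependent for flexibility, the uniform hypothesis $U_\varepsilon(A^{\lambda_0})\subset Q_k^{\lambda_0}$ actually yields one $\delta$ valid across all $k\in\{0,\ldots,m\}$.
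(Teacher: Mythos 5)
Your argument is correct and follows exactly the same route as the paper's proof: apply upper semi-continuity at $\lambda_0$ with the given threshold $\varepsilon$ to obtain $\delta$ with $\setdist{A^\lambda,A^{\lambda_0}}<\varepsilon$, conclude $A^\lambda\subset U_\varepsilon(A^{\lambda_0})\subset Q_k^{\lambda_0}$, and note that Lemma~\ref{lem:A_equal_to_AQ_for_arbitrary_Q} and Proposition~\ref{prop:convergence_boxCoverings} then justify starting the subdivision from $Q_k^{\lambda_0}$. Your closing observation that the uniform hypothesis on $\varepsilon$ in fact yields a single $\delta$ valid for all $k$ is a correct (if minor) refinement the paper does not state.
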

	
	\begin{proof}
		Let $k\in \{0,\ldots,m\}$ be fixed. Due to the upper semi-continuity of $A^\lambda$ in $\lambda_0$ there is $\delta>0$ such that 
		\[
		\setdist{A^\lambda,A^{\lambda_0}} < \varepsilon \qquad\forall \lambda\in U_\delta(\lambda_0).
		\]
		Thus, we conclude by assumption that
		\[
		A^{\lambda}\subset U_\varepsilon(A^{\lambda_0}) \subset Q_k^{\lambda_0} \qquad\forall \lambda\in U_\delta(\lambda_0).
		\]
	\end{proof}
	
	In the following we will call such $\lambda \in U_\delta(\lambda_0)$ \emph{feasible} and Proposition~\ref{prop:upper-semi} guarantees the existence of a feasible $\lambda\in \Lambda$. Throughout the remainder of this article we now suppose that $A^{\lambda}$ is upper semi-continuous and the additional assumptions of Proposition~\ref{prop:upper-semi} hold. Then the numerical realization of the set-oriented path following algorithm can be described as follows:
	
	\begin{algorithm}[H]
		\caption{Set-oriented path following method}
		\vspace{1em}
		\flushleft{\em Initialization:} Let $Q\subset \R^n$ be compact such that $A^{\lambda}\subset Q$ for all $\lambda\in \Lambda$. Fix $\lambda_0\in \Lambda$ and use the subdivision method described in section~\ref{sec:review} to obtain a family $\mathcal{Q}(\lambda_0)=\{Q_0^{\lambda_0},\ldots,Q_{m}^{\lambda_0}\}$ of approximations of $A^{\lambda_0}$.\\
		\vspace{1em}
		\flushleft{\em Path following:} For $j = 0,1,\ldots$
		\begin{enumerate}
			\item Choose $K_{j+1}\in \{0,\ldots,m\}$ and take a feasible $\lambda_{j+1}>\lambda_j$ according to Proposition~\ref{prop:upper-semi}
			such that $A^{\lambda_{j+1}}\subset Q_{K_{j+1}}^{\lambda_j}$.
			\item Perform $m-K_{j+1}$ subdivision and selection steps for $f_{\lambda_{j+1}}$ on $Q_{K_{j+1}}^{\lambda_j}$ in order to generate a new family
			\[
			\mathcal{Q}(\lambda_{j+1})=\{Q_0^{\lambda_{j+1}},\ldots,Q_{m}^{\lambda_{j+1}}\}
			\]
			of approximations of $A^{\lambda_{j+1}}$, where $Q_k^{\lambda_{j+1}}=Q_k^{\lambda_j}$ for $k=0,\ldots,K_{j+1}$.
		\end{enumerate}
		\label{alg:pathFollowing}
	\end{algorithm}
	
	In Figure~\ref{fig:pathFollowing}, we illustrate two steps of Algorithm~\ref{alg:pathFollowing} for the Lorenz system \cite{Lo63} given by
	\begin{equation}\label{eq:lor63}
	\begin{aligned}
	\dot x &= \sigma (y-x),\\
	\dot y &= x(\rho - z) - y,\\
	\dot z &= xy - \beta z,
	\end{aligned}
	\end{equation}
	where we use $\beta$ as our parameter of interest. We note that the derivation of the Lorenz model shows that $\beta$ is related to the aspect ratio of the convection cells.
	
	\begin{figure}[!h]
		\begin{center}
			\begin{minipage}{0.320\textwidth}
				\includegraphics[width = \textwidth]{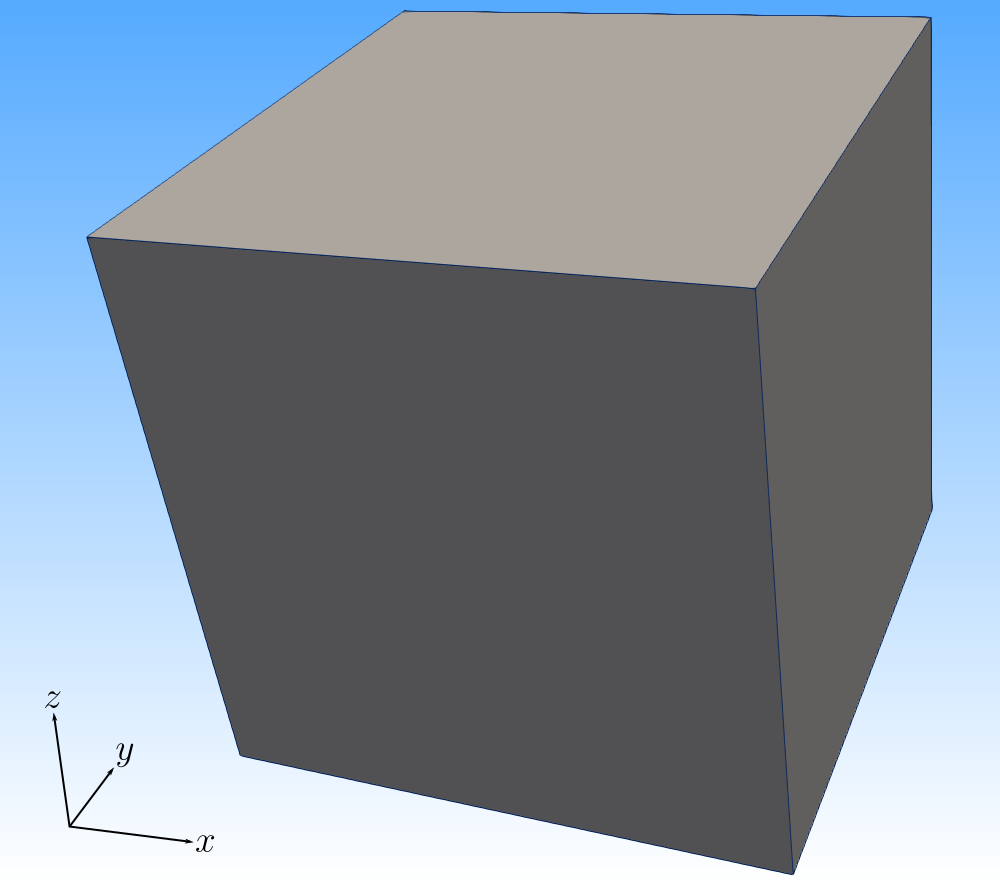}\\
				\centering \scriptsize{(a)}
			\end{minipage}
			\hfill
			\begin{minipage}{0.320\textwidth}
				\includegraphics[width = \textwidth]{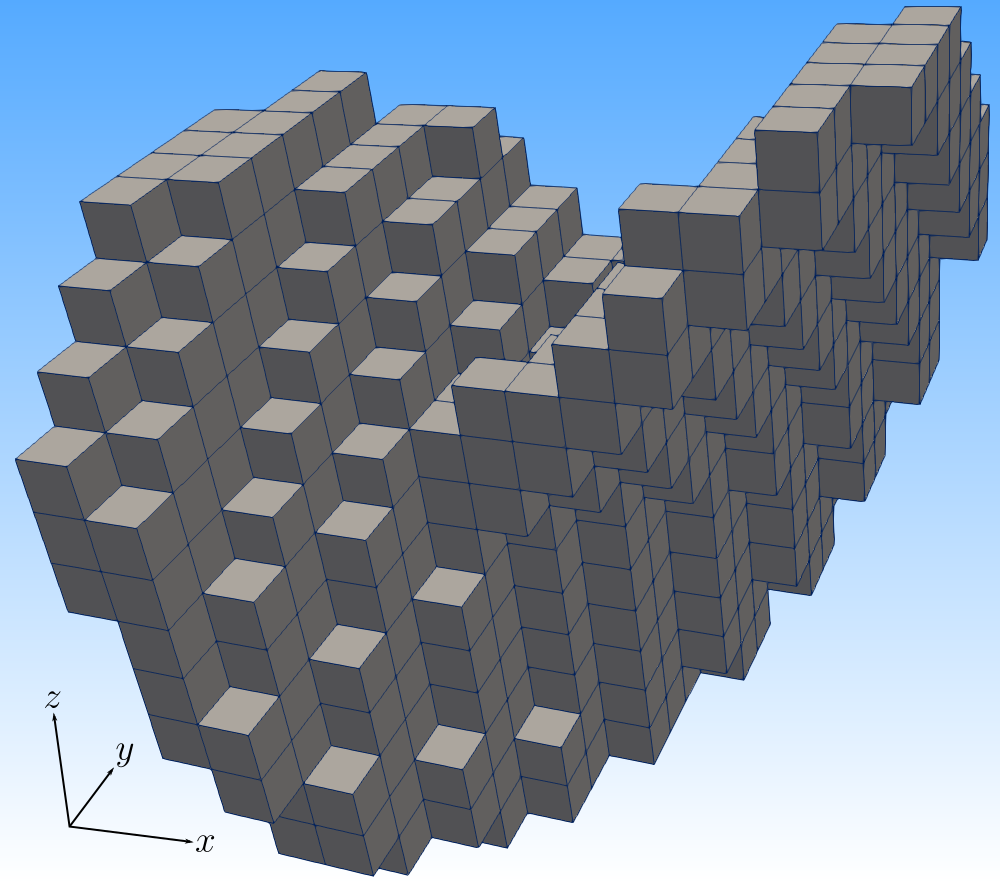}\\
				\centering \scriptsize{(b)}
			\end{minipage}
			\hfill
			\begin{minipage}{0.320\textwidth}
				\includegraphics[width = \textwidth]{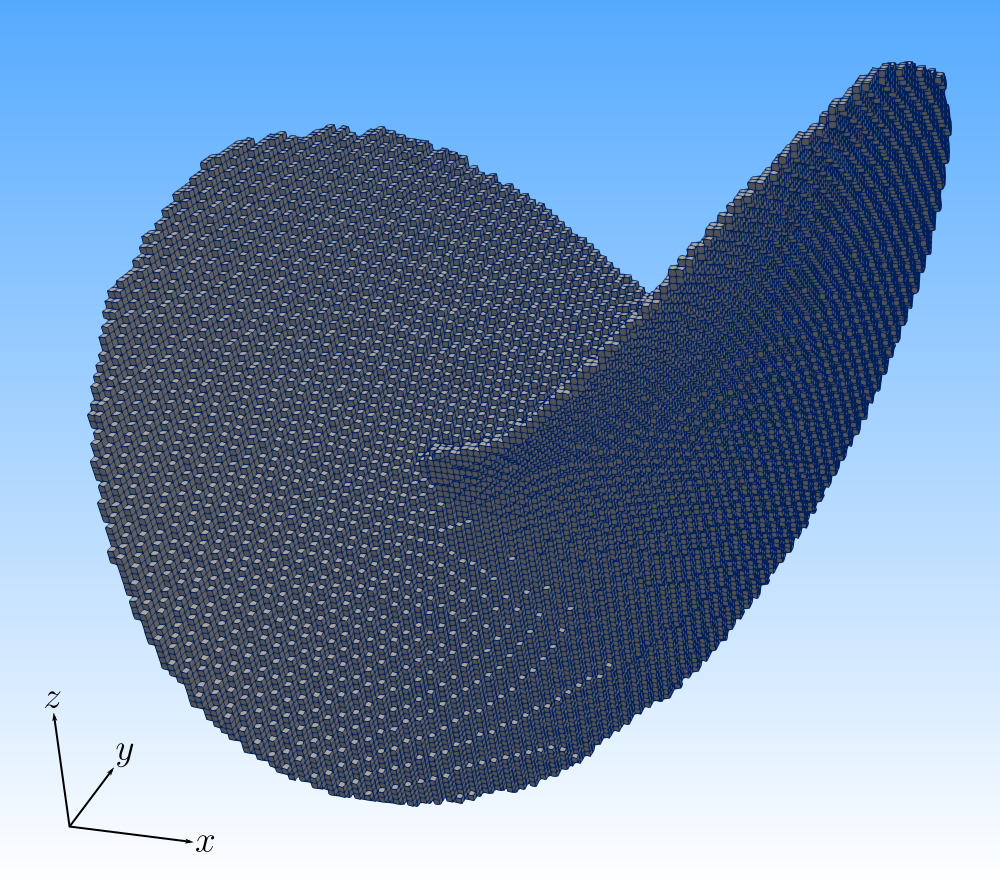}\\
				\centering \scriptsize{(c)}
			\end{minipage}\\
			\begin{minipage}{0.320\textwidth}
				\includegraphics[width = \textwidth]{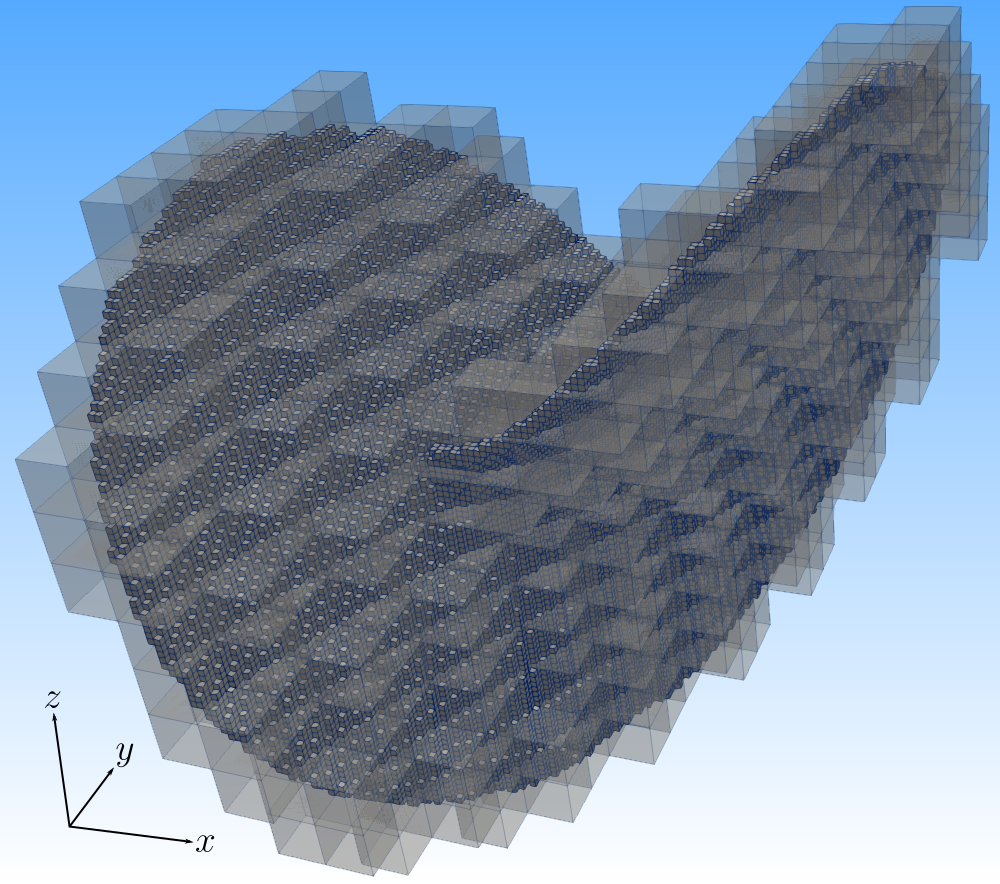}\\
				\centering \scriptsize{(d)}
			\end{minipage}
			\hfill
			\begin{minipage}{0.320\textwidth}
				\includegraphics[width = \textwidth]{Pictures/PF25.png}\\
				\centering \scriptsize{(e)}
			\end{minipage}
			\hfill
			\begin{minipage}{0.320\textwidth}
				\includegraphics[width = \textwidth]{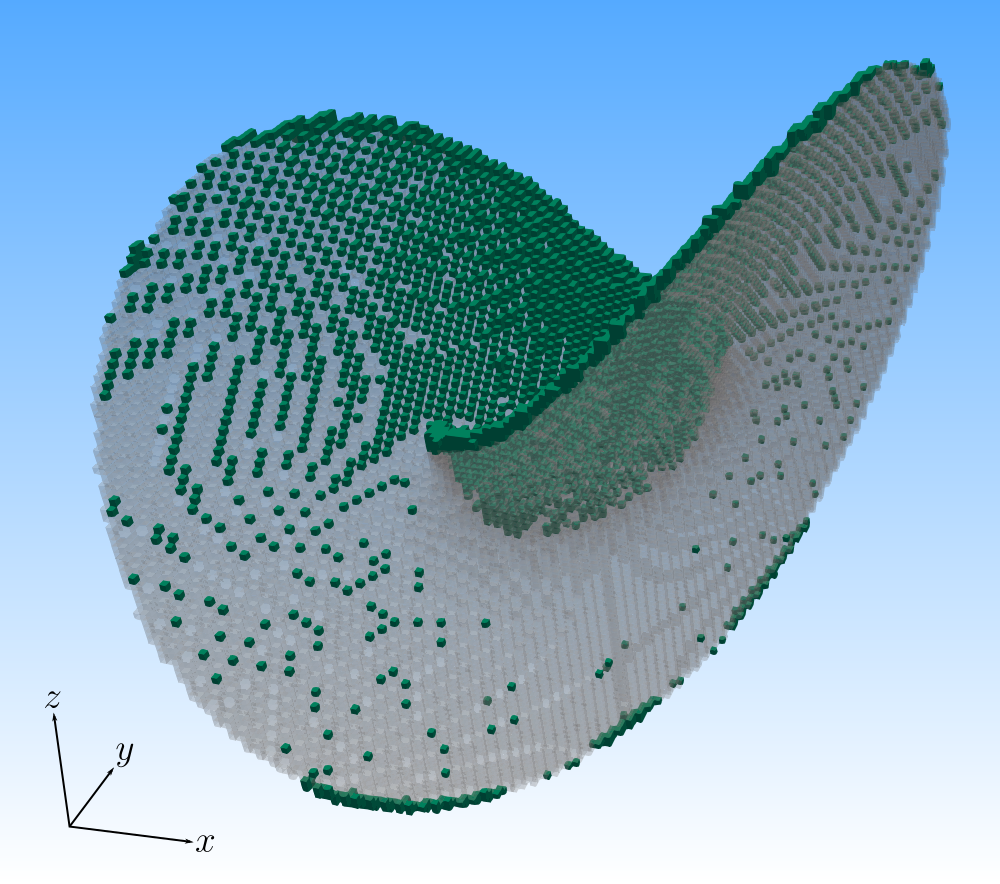}\\
				\centering \scriptsize{(f)}
			\end{minipage}
		\end{center}
		\caption{Illustration of Algorithm~\ref{alg:pathFollowing} for the Lorenz system \eqref{eq:lor63} with parameters $\sigma = 10, \rho = 28$, $\beta_1 = 8/3$ and $\beta_2 = 2.5$. 
			(a) Initial box $Q$. (b) and (c) Successively finer box coverings of the attractor $A_Q^{\beta_1}$, with the width of the boxes decreasing by factors $1/16$ and $1/128$, respectively. The final covering $Q_{21}^{\beta_1}$ in (c) has been obtained after $21$ subdivision and selection steps. 
			(d) Neighborhood $Q_{12}^{\beta_1}$ (transparent boxes) such that $A_Q^{\beta_2} \subset Q_{12}^{\beta_1}$. (e) Initial box covering $Q_{12}^{\beta_1}$ for the approximation of $A_Q^{\beta_2}$. (f) Box covering $Q_{21}^{\beta_2}$ of $A_Q^{\beta_2}$ after $9$ subdivision and selection steps. Here, the solid green boxes illustrate the change of the attracting set.}
		\label{fig:pathFollowing}
	\end{figure}
	
	\begin{remark}\label{rmk:pathFollowing}\quad
		\begin{enumerate}
			\item[(a)] Instead of following the path along $\lambda$ in positive direction one can also take $\lambda_{j+1}<\lambda_j$ and follow the path in the negative direction. 
			\item[(b)] Intuitively, choosing a larger $K_{j+1}$ decreases the range of feasible parameters $\lambda_{j+1}$. 
			However, one has to perform less subdivision and selection steps to reach the same level, i.e., the same fineness of the final box covering of $A^{\lambda_{j+1}}$.
			With more knowledge on the upper semi-continuity property \eqref{def:upper-semi-cont} we can make this more precise in the following Proposition. 
			\item[(c)] If $\lambda_{j+1}$ is not feasible, i.e., $\lambda_{j+1}\notin U_{\delta}(\lambda_j)$,
			$Q_{K_j}^{\lambda_j}$ might not contain all of $A^{\lambda_{j+1}}$ and thus, the subdivision algorithm does not approximate the whole attractor $A^{\lambda_{j+1}}$.
			However, to make the algorithm more robust, we reintroduce sets if an image $f_{\lambda_{j+1}}(x)$ of a test point $x\in \R^n$ is not contained in any set of the current collection $\cB_k(\lambda_j)$. 
		\end{enumerate}
	\end{remark}
	
	After discussing the choice of $\lambda_1\in \Lambda$ for fixed $K\in \{0,\ldots,m\}$ the following Proposition tells us, how to choose $K_{j+1}$ for a fixed step size in the parameter space $\Lambda$, i.e., how to choose $K\in\{0,\ldots,m\}$ when $\lambda_1\in\Lambda$ is given.
	
	\begin{proposition}\label{prop:A_usc}
		Let $\delta>0$ and $\lambda_1\in \Lambda$ with $\abs{\lambda_1-\lambda_0}\leq\delta$. Suppose there is a constant $C>0$ such that
		\begin{align}\label{eq:lin_cont}
			\setdist{A^{\lambda_1},A^{\lambda_0}}\leq C \delta
		\end{align}
		and $U_{C\delta}(A^{\lambda_0}) \subset Q_K^{\lambda_0}$ for one $K\in \{0,\ldots, m\}$. Then $A^{\lambda_1}\subset Q_K^{\lambda_0}$ and, in particular, $\lambda_1$ is feasible for $K$.
	\end{proposition}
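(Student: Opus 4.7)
The plan is to chain together three inclusions and read off the conclusion. First I would unpack the definition $\setdist{A^{\lambda_1},A^{\lambda_0}}=\sup_{x\in A^{\lambda_1}}\inf_{y\in A^{\lambda_0}}\norm{x-y}$. The hypothesis \eqref{eq:lin_cont} therefore says that every point $x\in A^{\lambda_1}$ satisfies $\inf_{y\in A^{\lambda_0}}\norm{x-y}\leq C\delta$. Interpreting $U_{C\delta}(A^{\lambda_0})$ as the $C\delta$-neighborhood of $A^{\lambda_0}$, this yields $A^{\lambda_1}\subset U_{C\delta}(A^{\lambda_0})$. (If one insists on an open neighborhood, an $\varepsilon$-buffer argument combined with the compactness of $A^{\lambda_1}$ and the inclusion hypothesis, which remains valid after shrinking, closes the gap — this is the only mildly delicate point, and it is a cosmetic one.)

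Second, I would invoke the assumption $U_{C\delta}(A^{\lambda_0})\subset Q_K^{\lambda_0}$ to transfer the inclusion through and conclude $A^{\lambda_1}\subset Q_K^{\lambda_0}$. Finally, recalling the terminology introduced just before Proposition~\ref{prop:upper-semi}, a parameter $\lambda_1$ is called feasible for $K$ precisely when $A^{\lambda_1}\subset Q_K^{\lambda_0}$, so the claim follows at once; Lemma~\ref{lem:A_equal_to_AQ_for_arbitrary_Q} together with Proposition~\ref{prop:convergence_boxCoverings} then additionally guarantees that the subdivision scheme initialized with $Q_K^{\lambda_0}$ indeed converges to $A^{\lambda_1}$.

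There is essentially no hard step here: the proposition is a clean bookkeeping statement whose purpose is to translate a quantitative modulus of upper semi-continuity \eqref{eq:lin_cont} together with a known ``inner thickness'' $C\delta$ of the covering $Q_K^{\lambda_0}$ around $A^{\lambda_0}$ into a concrete, a priori verifiable criterion for feasibility. The only obstacle worth mentioning is reconciling the open/closed convention for $U_\varepsilon(\cdot)$, as noted above, which is handled by either adopting the closed-neighborhood interpretation or by the standard $\varepsilon$-thickening trick.
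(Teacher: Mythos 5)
Your proposal is correct and follows essentially the same route as the paper: the hypothesis \eqref{eq:lin_cont} gives $A^{\lambda_1}\subset U_{C\delta}(A^{\lambda_0})$, and chaining this with the assumed inclusion $U_{C\delta}(A^{\lambda_0})\subset Q_K^{\lambda_0}$ yields the claim. The remark about the open/closed convention for $U_{C\delta}(\cdot)$ is a reasonable point of care but does not change the argument.
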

	\begin{proof}
		According to \eqref{eq:lin_cont} the $C\delta$-neighborhood $U_{C\delta}(A^{\lambda_0})$ of $A^{\lambda_0}$ contains $A^{\lambda_{1}}$ and by assumption we immediately obtain
		\[
		A^{\lambda_{1}}\subset U_{C\delta}(A^{\lambda_0}) \subset Q_K^{\lambda_0}
		\]
		as claimed.
	\end{proof}
	
	\section{Numerical examples}\label{sec:numerical_examples}
	
	In this section we present results of computations carried out for a 
	four- and a nine-dimensional reduced order model from fluid dynamics.

	\subsection{A four-dimensional model of self-sustained flows}
	As a first example, we consider a four-dimensional nonlinear model of self-sustained flows introduced in 
	\cite{Wal95a} and further discussed in \cite{Wal95b}. The four variables of the model 
	represent the spanwise velocity components ($u$),  the vortices ($v$), the streak ($w$), and the mean profile ($m$) 
	(cf. \cite{Wal95b} for more explanations of the model). Their dynamics is given by
	\begin{equation}\label{eq:waleffe}
	\begin{aligned}
	\frac{\mbox{d}}{\mbox{d}t}\begin{pmatrix} u \\ v \\ w \\ m		\end{pmatrix} = \frac{1}{R} \begin{pmatrix}	0 \\ 0 \\ 0 \\ \sigma 	\end{pmatrix} - \frac{1}{R} \begin{pmatrix} \lambda u\\ \mu v\\ \nu w\\ \sigma m \end{pmatrix} + \begin{pmatrix} 0 & 0 & -\gamma w & v\\ 0 & 0 & \delta w & 0\\ \gamma w & -\delta w & 0 & 0 \\ -v & 0 & 0 & 0 \end{pmatrix}\begin{pmatrix} u \\ v \\ w \\ m \end{pmatrix},
	\end{aligned}
	\end{equation}
	where $R > 0$ is the Reynolds number, and $\lambda$, $\mu$, $\nu$, $\sigma, \gamma$, and $\delta$ are positive
	parameters.
	For the parameters, we take values $\lambda=\mu=\sigma=10$, $\nu=15$, $\delta = 1$ and $\gamma = 0.5$; a saddle-node bifurcation then
	appears for $R_c=98.6325$. 
	
	Our goal is to numerically analyze how the attracting sets change for 
	the Reynolds numbers $R \in [98,400]$. We define $x = (u,v,w,m)^\top$ and denote by $f(x,R)$ the time-$T$-map of \eqref{eq:waleffe} which depends on the Reynolds number $R$ and the integration time $T$. For the following analysis we fix the integration time of $T=20$.
	Furthermore, we choose the initial box $Q = [-0.9, 1.1]\times [-0.8, 1.2] \times [-1,1] \times [-0.8,1.2]$ in which we want to approximate the relative global attractor $A_Q^R$. According to Algorithm~\ref{alg:pathFollowing} we fix $m=36$ and $K=32$. Following of Remark~\ref{rmk:pathFollowing} (a) we start the path following algorithm for the Reynolds number $R_0 = 400$ and define $R_{j+1} = R_j -1$ for $j=0,\ldots,302$ such that the interval $\Lambda=[98,400]$ is discretized equidistantly. A detailed analysis of \eqref{eq:waleffe} for $R \in \Lambda$ can be found in \cite{Wal95b}. In what follows, we will study the attractor from a global point of view. To this end, Figure~\ref{fig:waleffe4Da} and Figure~\ref{fig:waleffe4Db} show three-dimensional projections of the relative global attractor obtained by Algorithm~\ref{alg:pathFollowing} for different Reynolds numbers.
	
	
	For a parameter value just below the saddle-node bifurcation, i.e., for $R = 98$, the attracting set does not contain any invariant structures besides the laminar profile $(u,v,w,m) = (0,0,0,1)$. For $R$ just above the saddle node bifurcation, i.e., for $R = 98.6325$, Figure~\ref{fig:waleffe4Da} (a) shows that the attractor now contains an upper branch steady solution, as well as the lower branch saddle state. This situation persists up to $R = 100.0232$, after which the upper branch undergoes a supercritical Hopf bifurcation.
	
	\begin{figure}[!htb]
		\centering
		\begin{minipage}{0.320\textwidth}
			\includegraphics[width = \textwidth]{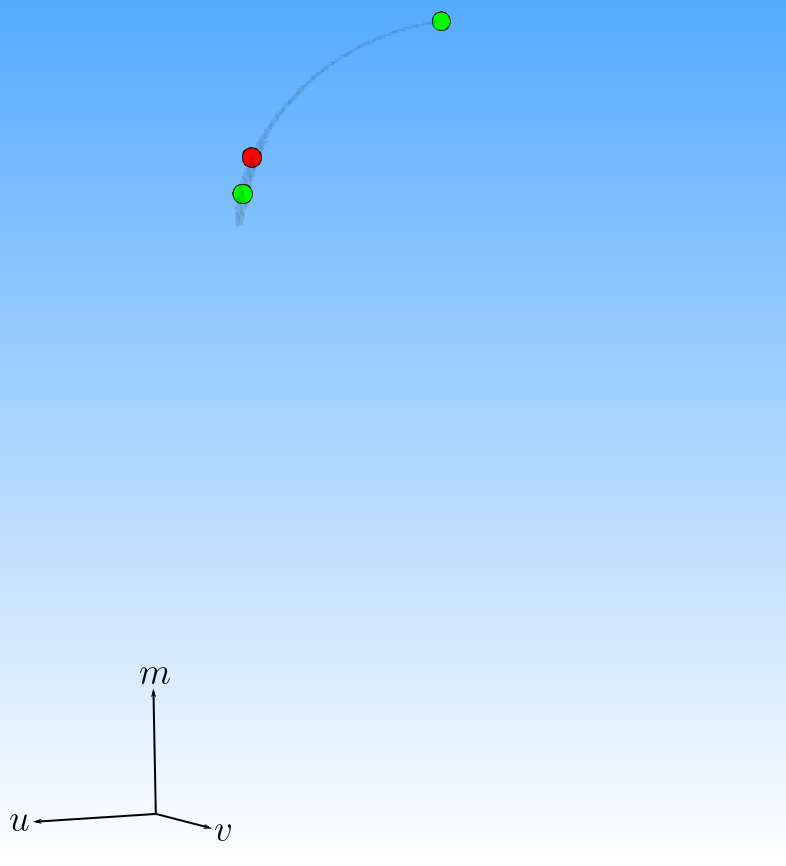}\\
			\centering \scriptsize{(a) $R = 99$}
		\end{minipage}
		\hfill
		\begin{minipage}{0.320\textwidth}
			\includegraphics[width = \textwidth]{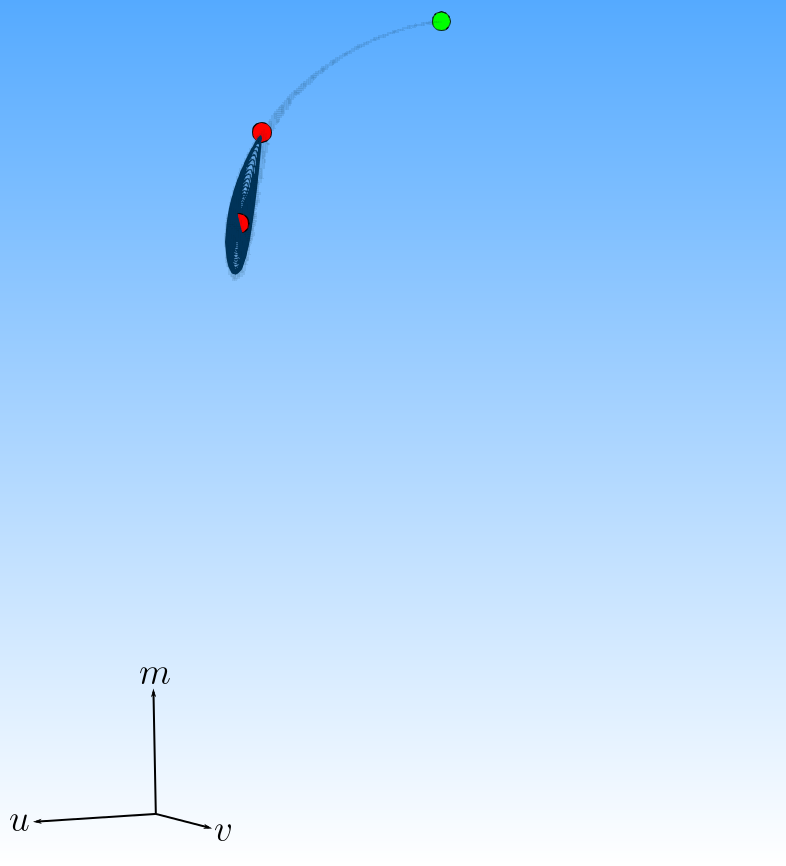}\\
			\centering \scriptsize{(b) $R = 101$}
		\end{minipage}
		\hfill
		\begin{minipage}{0.320\textwidth}
			\includegraphics[width = \textwidth]{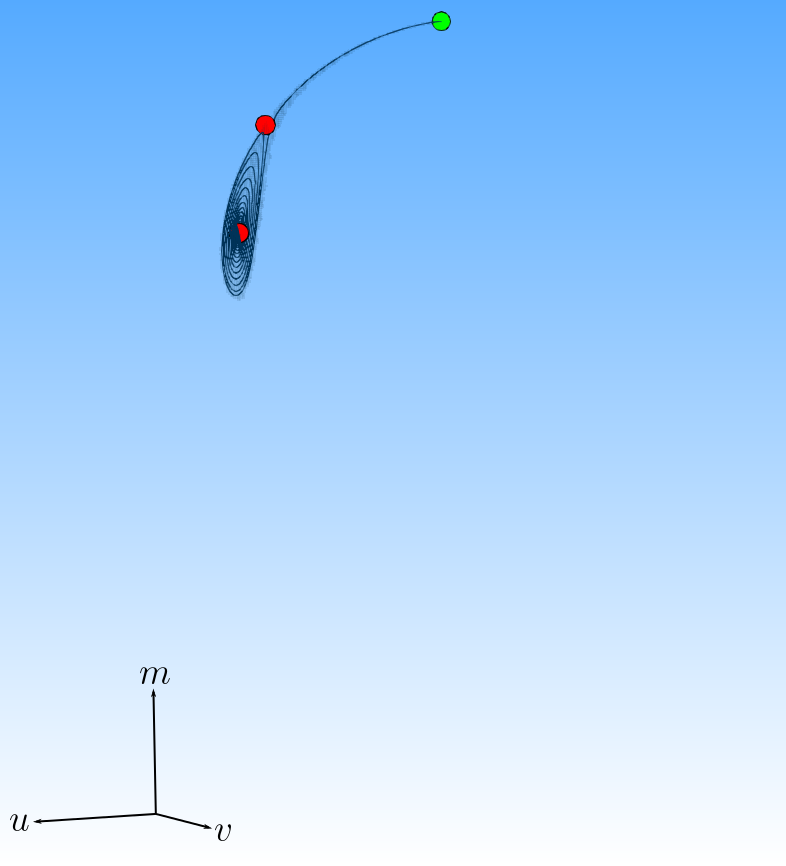}\\
			\centering \scriptsize{(c) $R = 102$}
		\end{minipage}
		\caption{Three-dimensional projections of the relative global attractor $A_Q^{R}$ for the low dimensional model of a self-sustained flow \eqref{eq:waleffe}. Stable and unstable fixed points are shown in green and red, respectively. The green fixed point at the top of each figure depicts the laminar solution $(u,v,w,m) = (0,0,0,1)$ which remains stable for all values of $R$ (cf.~\cite{Wal95b}).}
		\label{fig:waleffe4Da}
	\end{figure}
	
	The limit cycle is stable (cf.~Figure~\ref{fig:waleffe4Da} (b)) until $R = 101.0311$ (cf.~Figure~\ref{fig:waleffe4Da} (c)), where it disappears in a homoclinic bifurcation. Note that the attracting set does not change significantly, although the dynamics does: in the homoclinic bifurcation
	the attractor rips open and becomes a transient saddle. In order to demonstrate 
	this change in the dynamics, we compare in Figure~\ref{fig:waleffe_lifetimes} (a) and (b) the average lifetimes for each box in the  attracting set for $R = 101$ and $R=102$, respectively. 
	
	\begin{figure}[!htb]
		\centering
		\begin{minipage}{0.4\textwidth}
			\includegraphics[width = \textwidth]{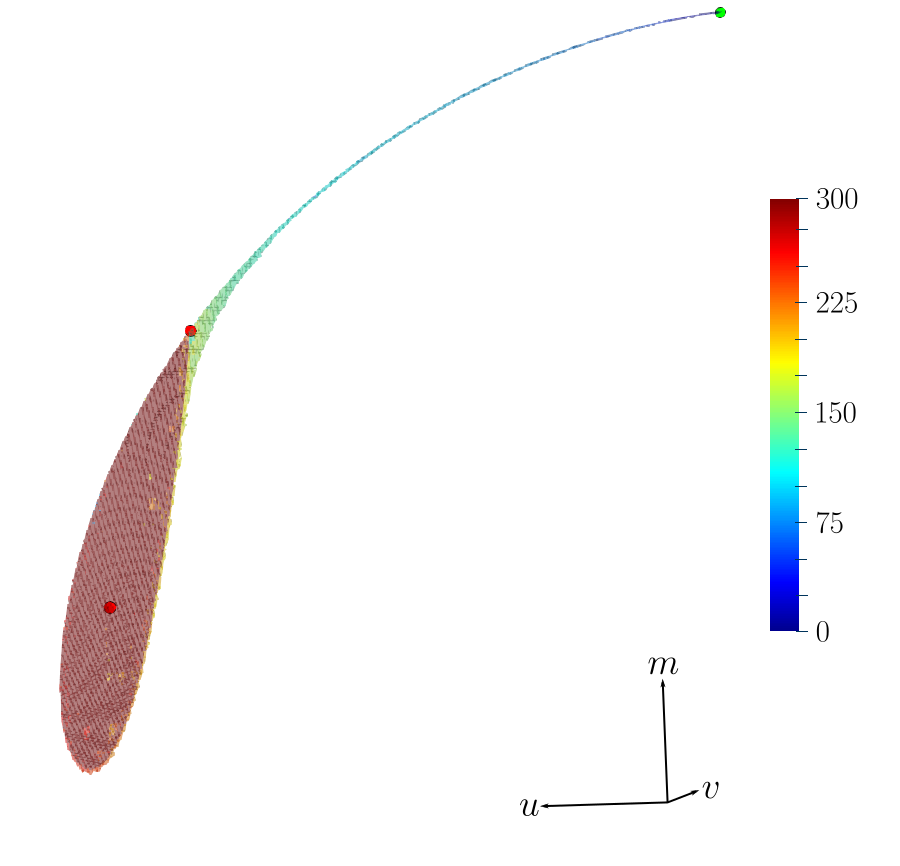}\\
			\centering \scriptsize{(a) $R = 101$}
		\end{minipage}
		\hspace{2em}
		\begin{minipage}{0.4\textwidth}
			\includegraphics[width = \textwidth]{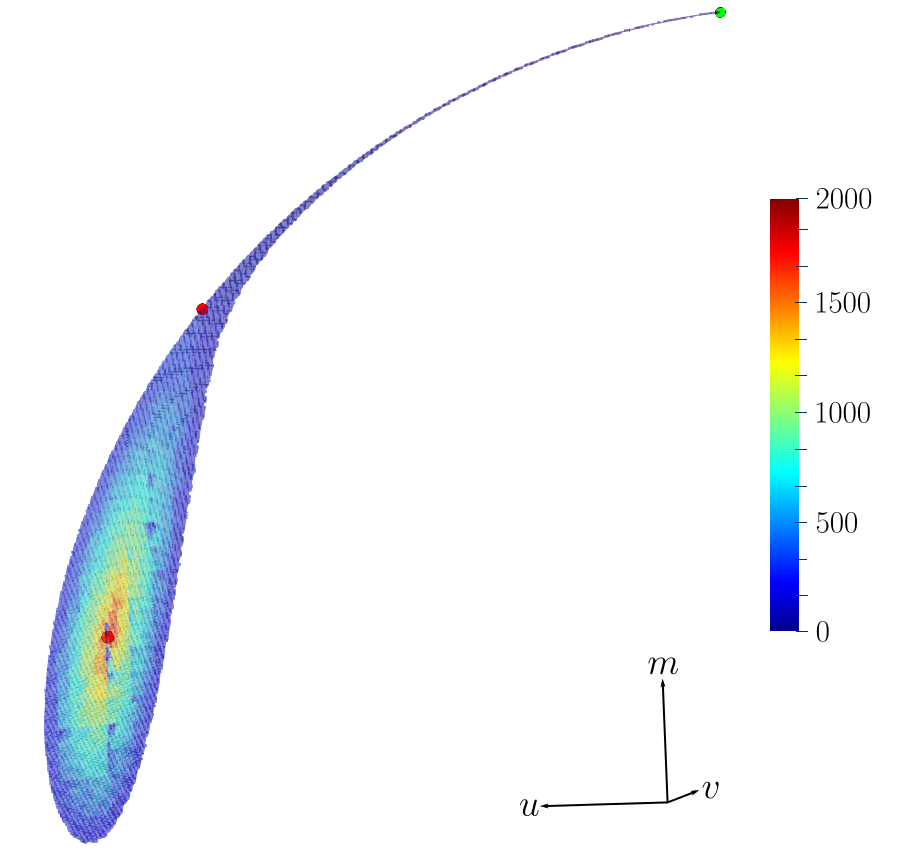}\\
			\centering \scriptsize{(b) $R = 102$}
		\end{minipage}
		\caption{Three-dimensional projection of the relative global attractor $A_Q^R$ of \eqref{eq:waleffe} for 
			$R = 101$ and $R=102$ with color-coded lifetimes.
			In (a), integration of test points was carried out for times up to $t = 300$. The long lifetimes in the droplet shaped region that ends at the saddle (red dot) indicate that all trajectories are attracted to the stable limit cycle. On the other side of the saddle, the connection to the laminar profile (green dot) has different colors, reflecting the different times it takes to reach the laminar fixed point. In (b) the integration was carried out up to $t = 2000$ to highlight the slow escape from the region of the limit cycle, which
			now has become unstable.}
		\label{fig:waleffe_lifetimes}
	\end{figure}
	
	For $R=101$ we see a strict separation in the attracting set by the edge state: all boxes between the edge state and the laminar state have finite lifetimes, as they are attracted towards the laminar profile. All boxes inside the lobe around the upper branch state have infinite lifetimes because they are attracted to the limit cycle and cannot return to the laminar profile. Here, test points in each box were only integrated for times up to $t = 300$: the choice of time is not essential, since for a closed attractor around the upper branch state no trajectory can return to the laminar flow. For $R=102$ the limit cycle is not stable anymore, and all points except for the fixed points and the limit cycle return to the laminar profile. Observe that the obtained box coverings do not only cover the laminar profile but also the unstable manifolds which is due to the fact that the relative global attractor contains all backward invariant sets.
	
	For $102 < R \leq 356$ the attractor develops a more complicated structure with folds and other features (Figure~\ref{fig:waleffe4Db} (a--d)), that arise from the projection from the four-dimensional state space to the three-dimensional image plane.
	
	\begin{figure}[H]
		\begin{center}
			\begin{minipage}{0.3275\textwidth}
				\includegraphics[width = \textwidth, height = \textwidth]{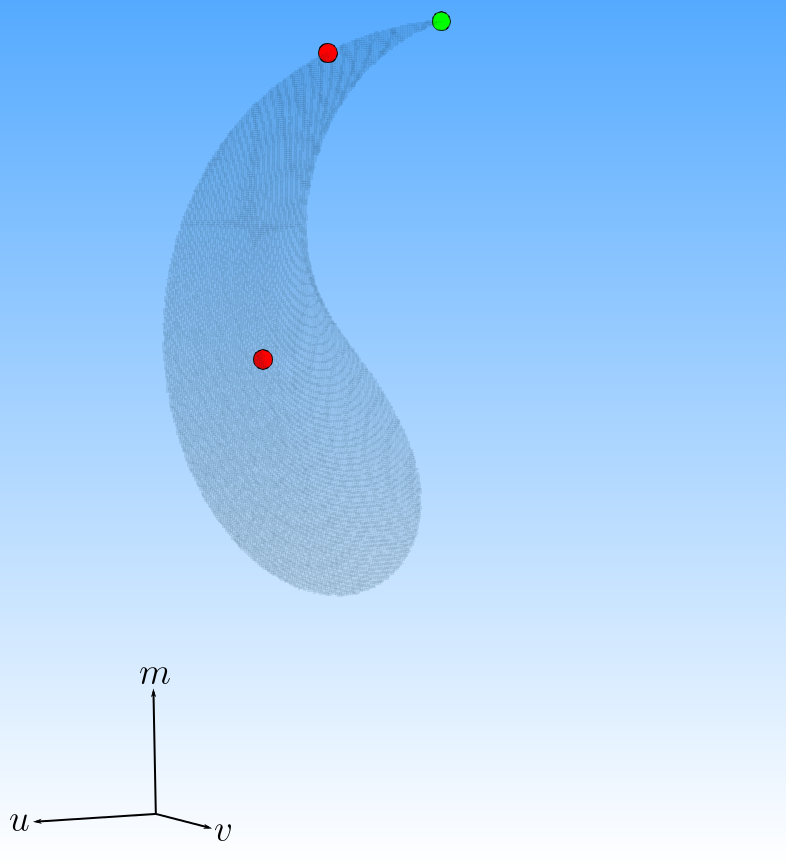}\\
				\centering \scriptsize{(a) $R = 139$}
			\end{minipage}
			\hfill
			\begin{minipage}{0.320\textwidth}
				\includegraphics[width = \textwidth, height = \textwidth]{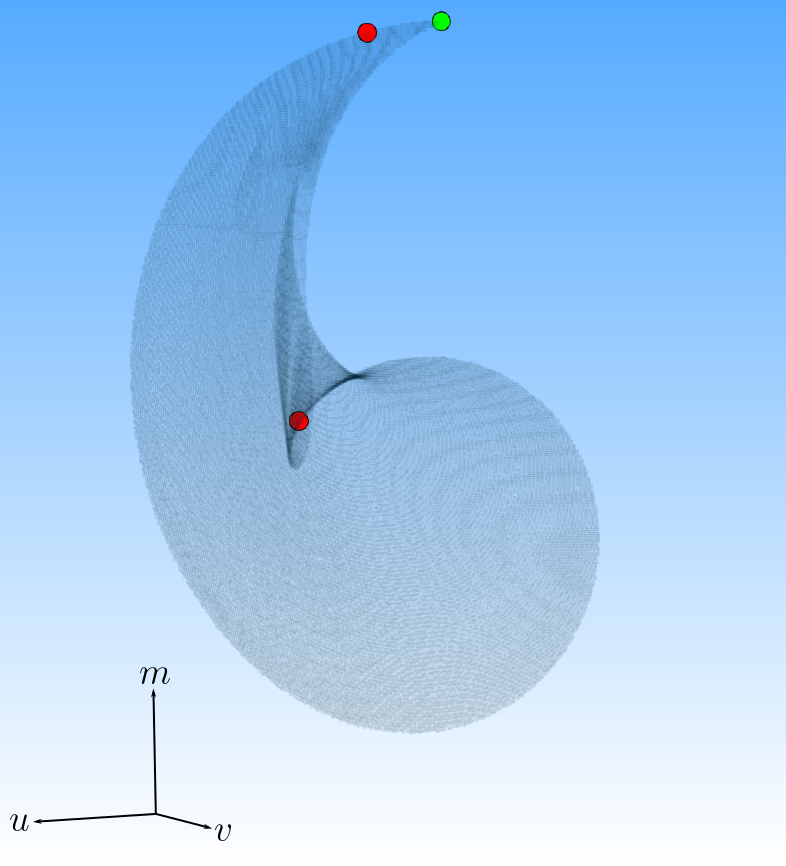}\\
				\centering \scriptsize{(b) $R = 200$}
			\end{minipage}
			\hfill
			\begin{minipage}{0.320\textwidth}
				\includegraphics[width = \textwidth, height = \textwidth]{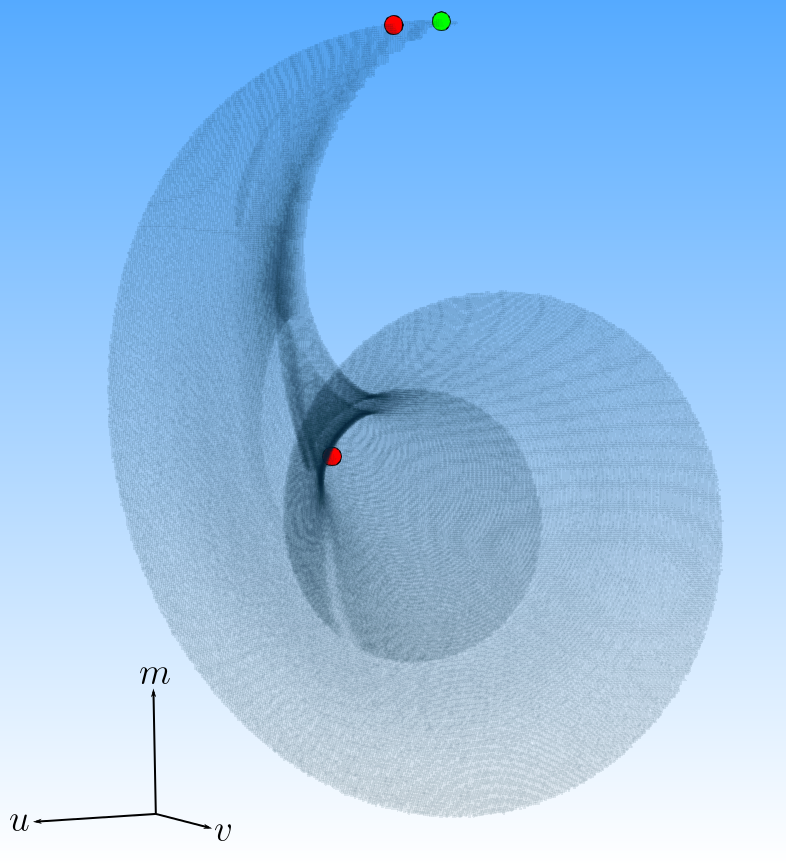}\\
				\centering \scriptsize{(c) $R = 300$}
			\end{minipage}\\
			\begin{minipage}{0.320\textwidth}
				\includegraphics[width = \textwidth, height = \textwidth]{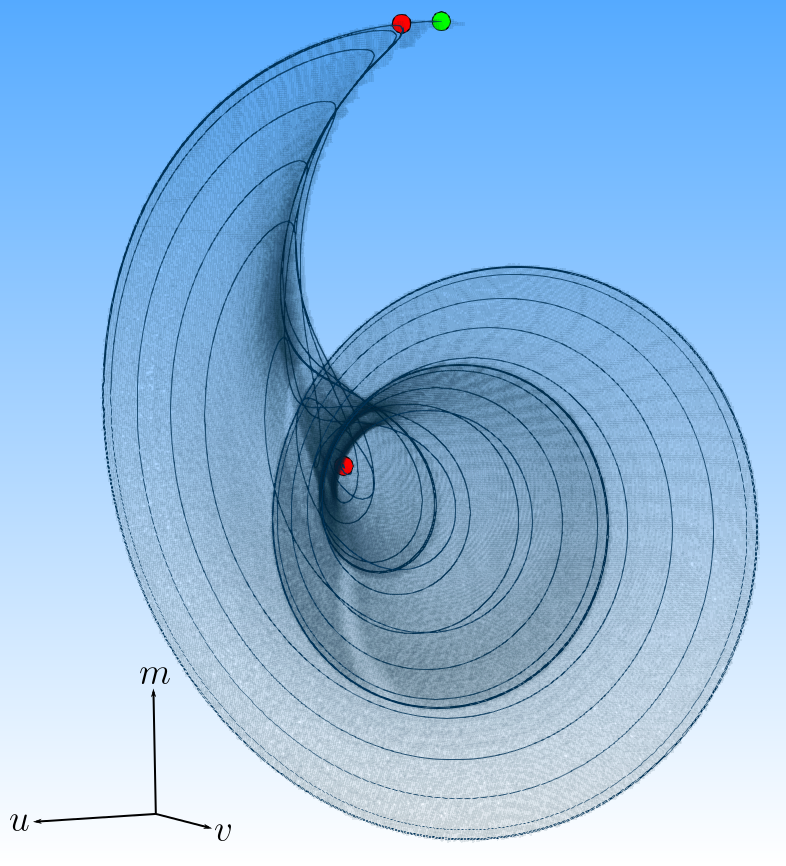}\\
				\centering \scriptsize{(d) $R = 356$}
			\end{minipage}
			\hfill
			\begin{minipage}{0.320\textwidth}
				\includegraphics[width = \textwidth, height = \textwidth]{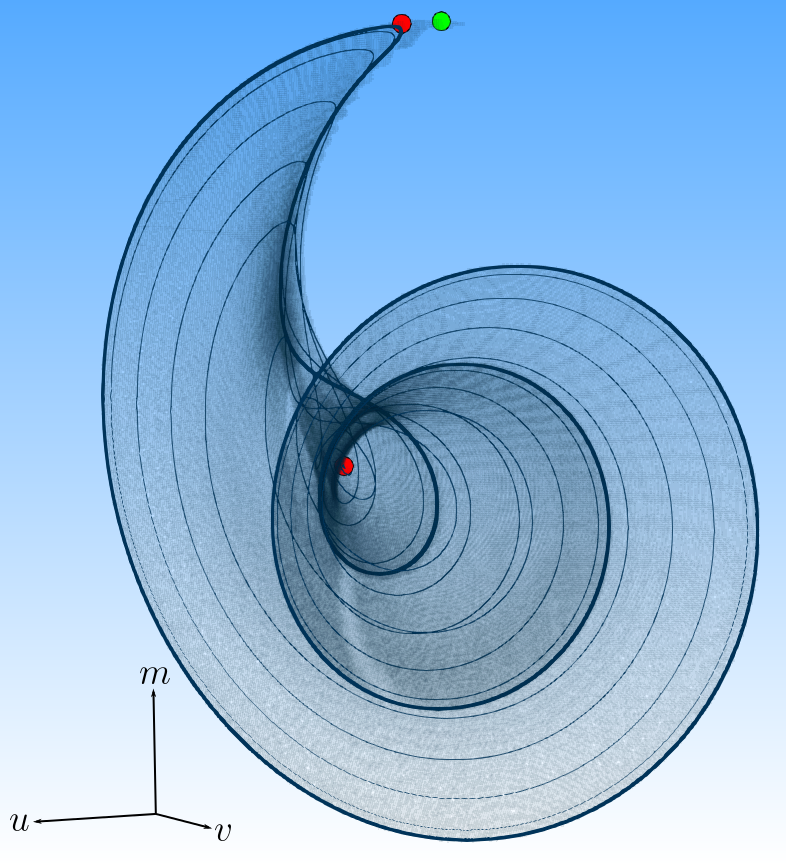}\\
				\centering \scriptsize{(e) $R = 357$}
			\end{minipage}
			\hfill
			\begin{minipage}{0.320\textwidth}
				\includegraphics[width = \textwidth, height = \textwidth]{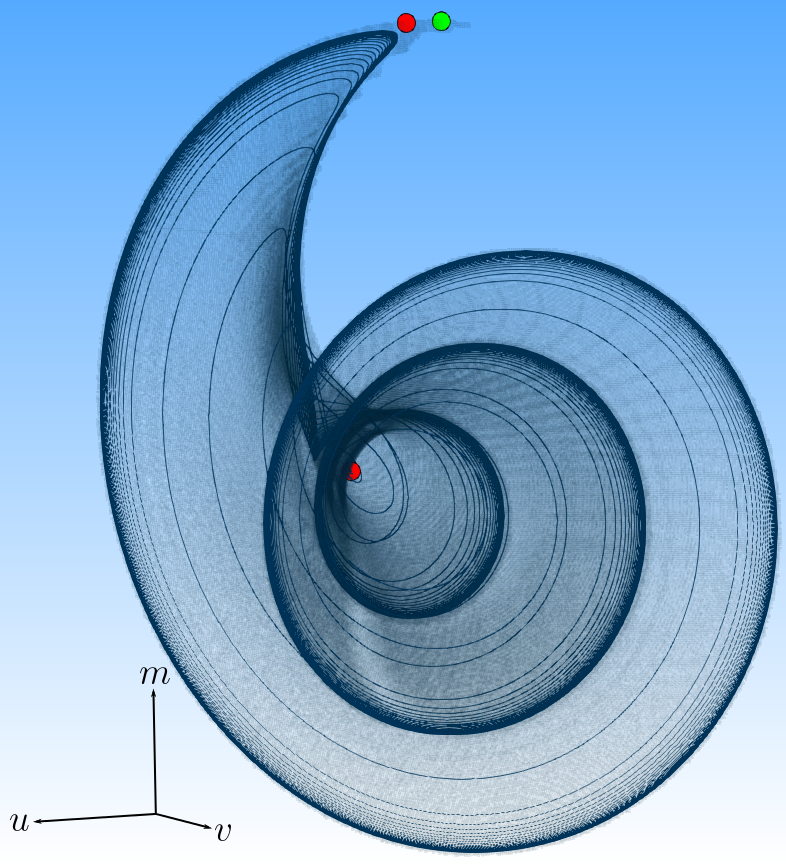}\\
				\centering \scriptsize{(f) $R = 400$}
			\end{minipage}
		\end{center}
		\caption{Three-dimensional projections of the relative global attractor $A_Q^{R}$ for the low dimensional model of a self-sustained flow \eqref{eq:waleffe}. Stable and unstable fixed points are shown in green and red, respectively. The green fixed point at the top of each figure depicts the laminar solution $(u,v,w,m) = (0,0,0,1)$ which remains stable for all values of $R$ (cf.~\cite{Wal95b}).}
		\label{fig:waleffe4Db}
	\end{figure}
	
	Finally, for $356 < R < 435$ another stable limit cycle appears (cf.~Figure~\ref{fig:waleffe4Db} (e) and (f)).

	\subsection{A nine-dimensional model for turbulent shear flows}
	As a second example we consider a Fourier based nine-dimensional model for 
	a turbulent shear flow with free slip boundaries and a sinusoidal base profile. A first version with 8 degrees of freedom was described by Waleffe \cite{Wal95a,Wal95b}, and extended to 9 modes in \cite{MFE04,MFE05}. The model is based on a Fourier expansion of the velocity field, and every component can be assigned to a specific flow field. In addition to the Reynolds number, the model has two spatial parameters describing the domain size in spanwise and downstream direction. The choice we consider here is the NBC domain (Nagata, Busse, Clever \cite{Nagata:1990ae,CB97}), i.e., a box with $L_z=2\pi$ in spanwise and $L_x=4\pi$ in downstream direction. For these parameters, the first state that appears	is a periodic orbit, at $R=89.76$. The periodic orbit is unstable from the onset, so that the dynamics is transient without the need for the occurrence of a crisis bifurcation. However, the lifetime increases very rapidly with Reynolds number \cite{MFE04}.
	
	Again, we choose the Reynolds number $R$ as our parameter of interest and compute the relative global attractor $A_Q^{R_j}$ for Reynolds numbers starting with an initial $R_0 = 200$ and then decreasing in steps of one, $R_{j+1} = R_{j} - 1$, ${j=0,\ldots,100}$. Due to the complexity of the dynamics in the nine-dimensional phase space it is hard to visualize the attracting set. An impression of the set may be obtained by three-dimensional projections onto the modes $(a_1,a_4,a_3)$ that correspond to $(m,u,v)$ in the four-dimensional model \eqref{eq:waleffe} and to the modes $(a_1,a_6,a_2)$ which represent the principal components of the set.
	
	\begin{figure}[H]
		\begin{center}
			\begin{minipage}{0.320\textwidth}
				\includegraphics[width = \textwidth]{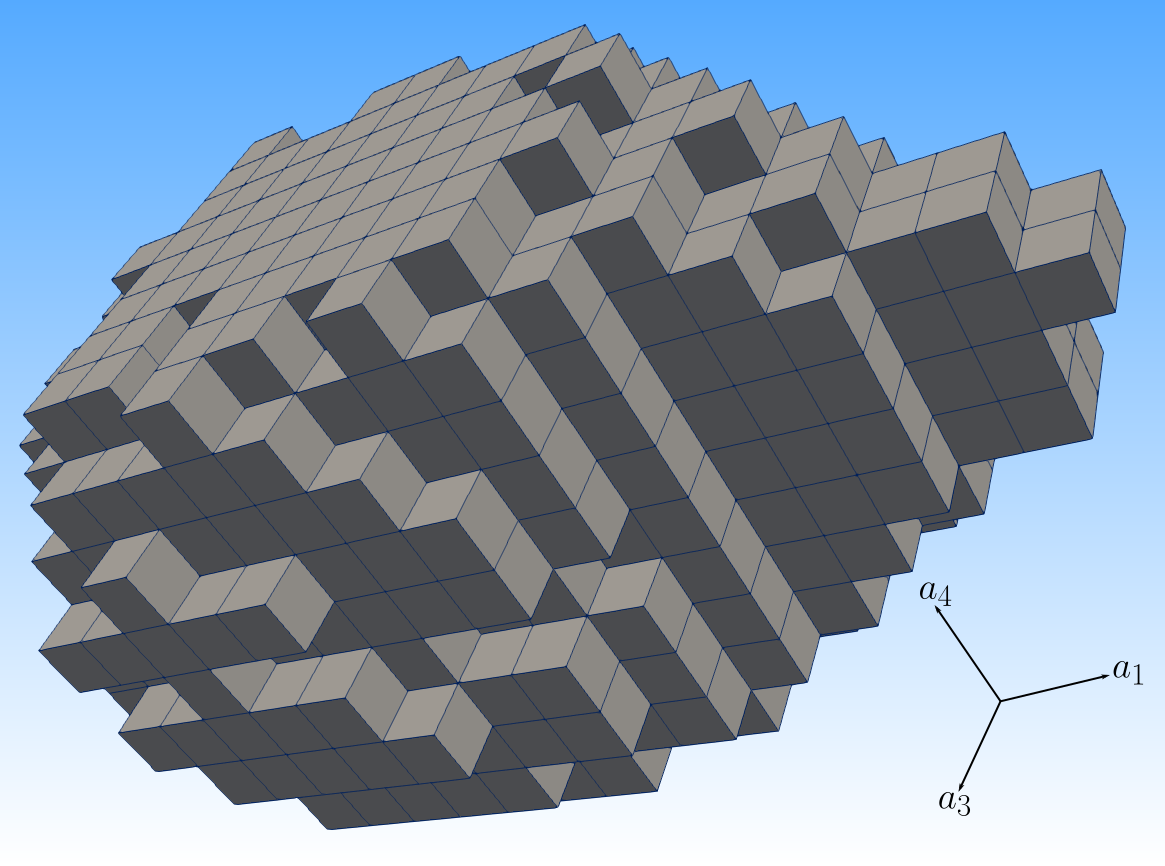}\\
			\end{minipage}
			\hfill
			\begin{minipage}{0.320\textwidth}
				\includegraphics[width = \textwidth]{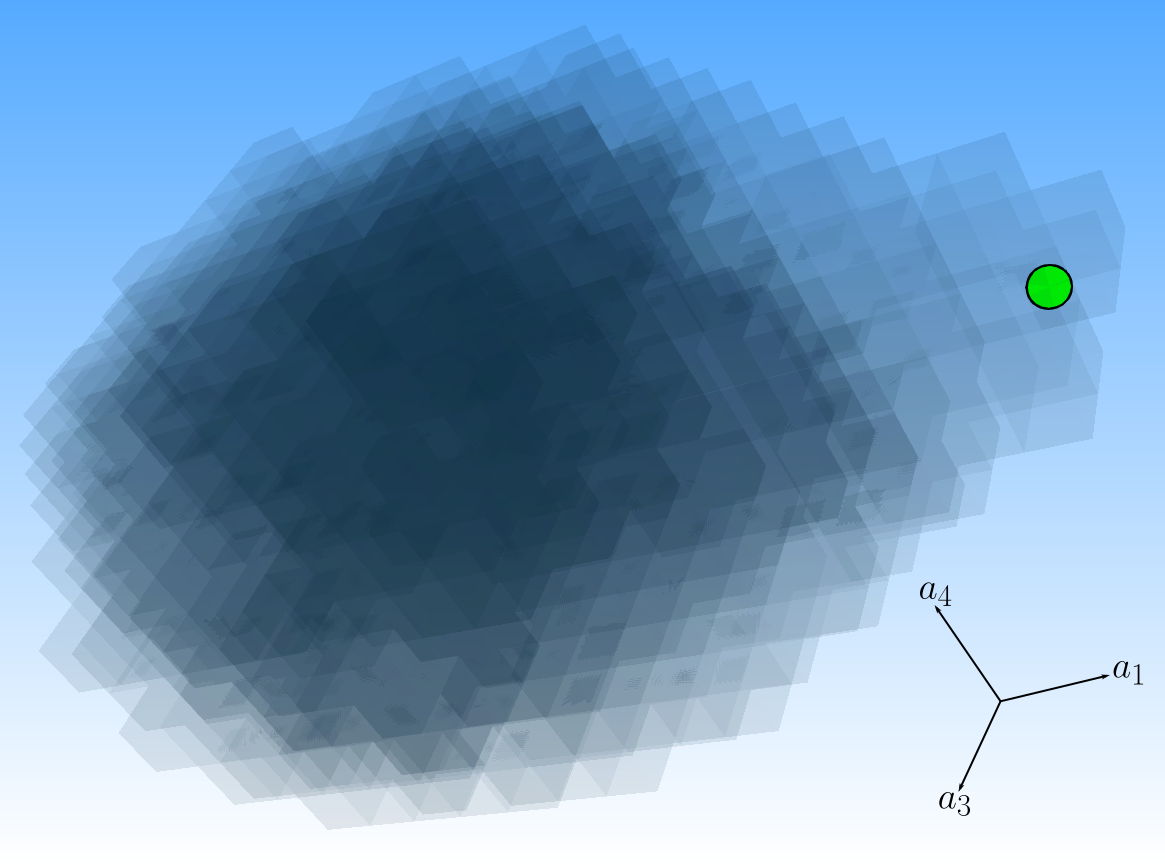}\\
			\end{minipage}
			\hfill
			\begin{minipage}{0.320\textwidth}
				\includegraphics[width = \textwidth]{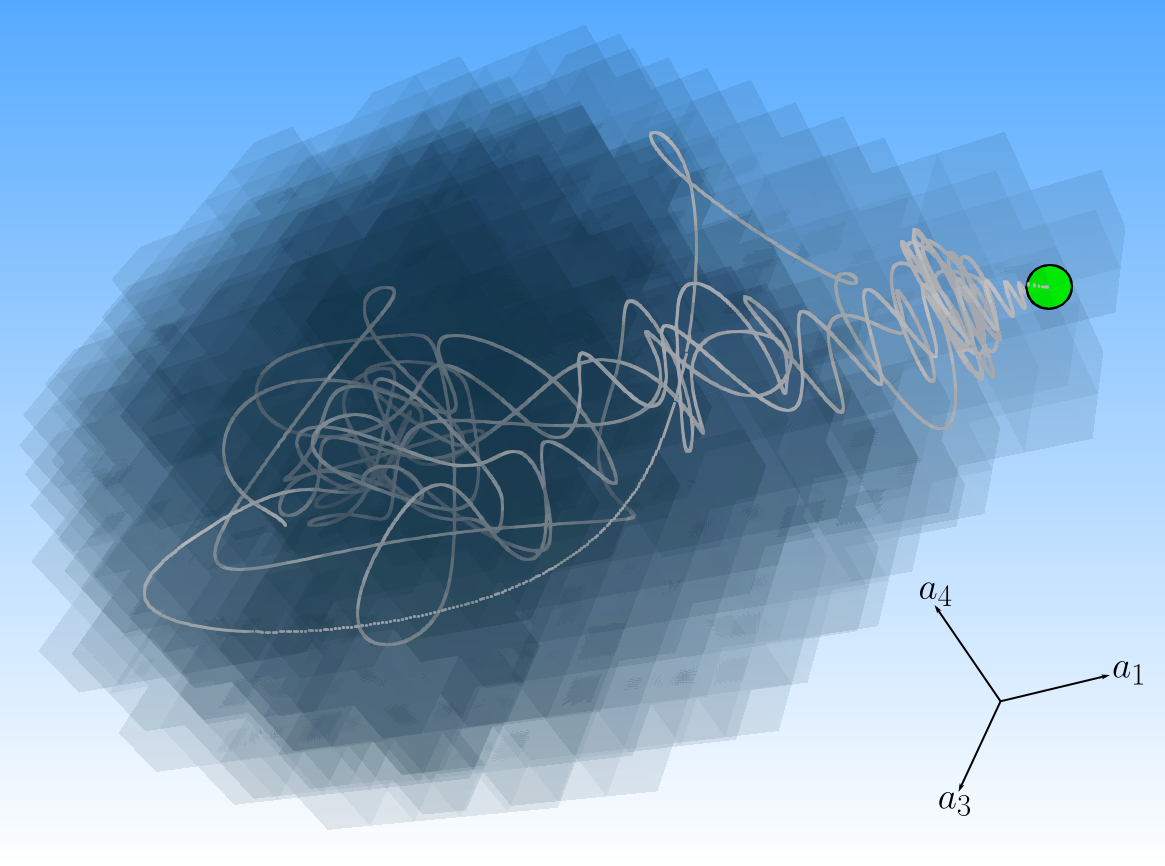}\\
			\end{minipage}\\
			\begin{minipage}{0.320\textwidth}
				\includegraphics[width = \textwidth]{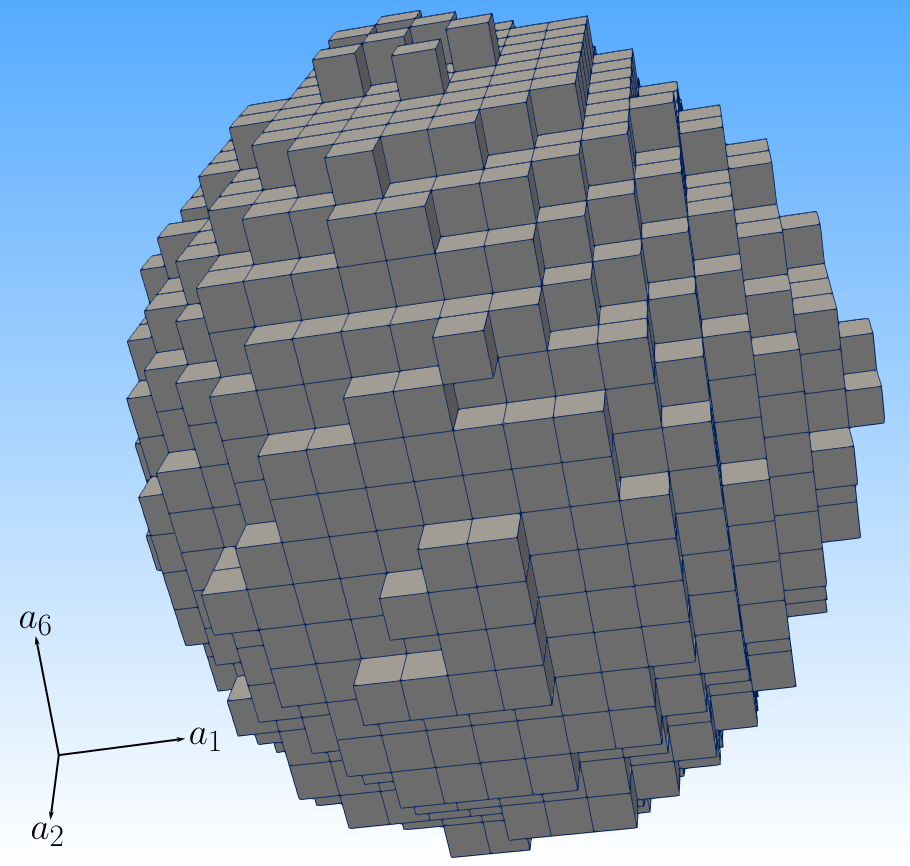}\\
				\centering \scriptsize{(a) Solid attractor}
			\end{minipage}
			\hfill
			\begin{minipage}{0.320\textwidth}
				\includegraphics[width = \textwidth]{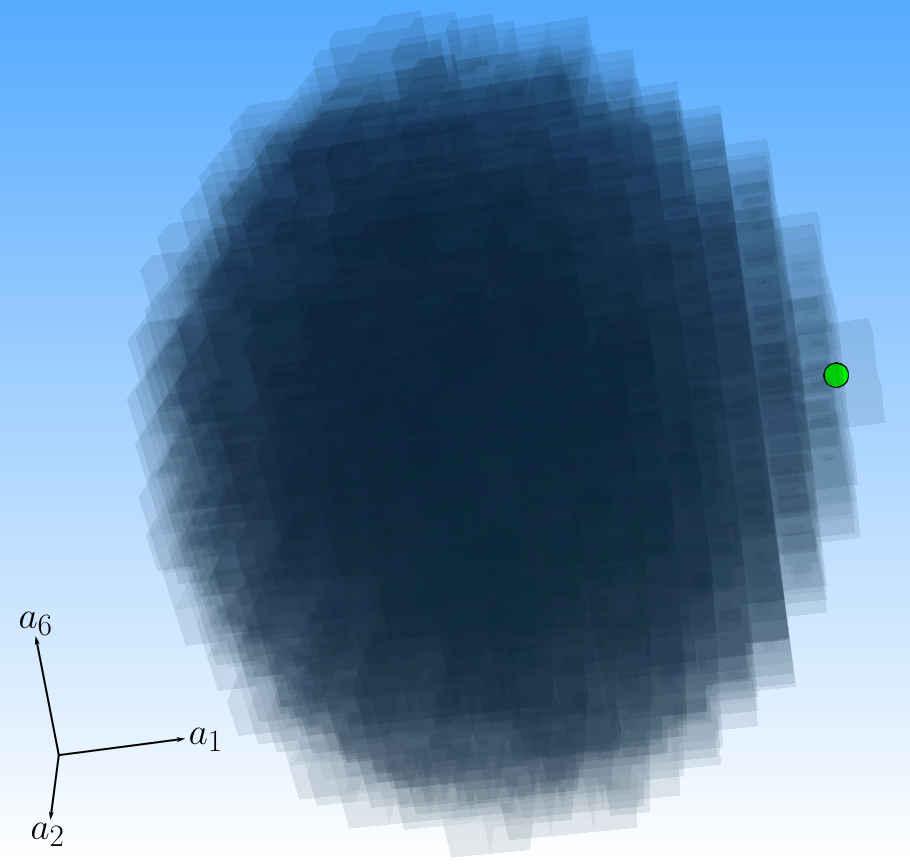}\\
				\centering \scriptsize{(b) Transparent boxes}
			\end{minipage}
			\hfill
			\begin{minipage}{0.320\textwidth}
				\includegraphics[width = \textwidth]{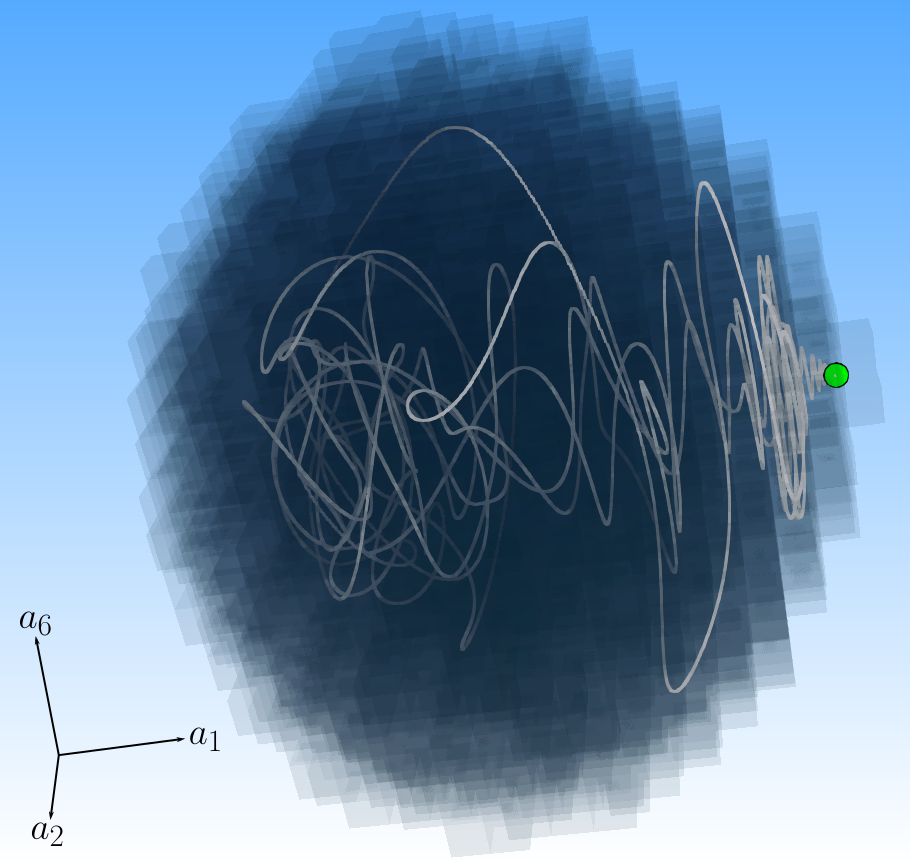}\\
				\centering \scriptsize{(c) Typical
					trajectory}
			\end{minipage}
		\end{center}
		\caption{Two three-dimensional projections of the relative global attractor $A_Q^{R}$ for the nine-dimensional model for turbulent shear flow for $R = 200$. 
			The green dot depicts the laminar solution $a_1 = 1, a_2=a_3=\ldots=a_9 = 0$ which is stable.	In (c) the attracting set is transparent so that a typical trajectory inside the attracting set can be	seen (it is the same trajectory in both projections).}
		\label{fig:Eckhardt9d}
	\end{figure}
	
	In Figure~\ref{fig:Eckhardt9d} we see that these projections of the attractor look like dense balls with no discernable interior structure. A measure of the increasing complexity of the dynamics is the dimension of the attracting set, here
	quantified by the box-counting dimension (cf.~\cite{Fal13}) of the box coverings obtained by Algorithm~\ref{alg:pathFollowing}. Figure~\ref{fig:bcd_9d} 
	shows that this dimension increases with Reynolds number. For $R=100$ the dimension of the attracting set is about $d = 2.6$ and increases to about $3.1$ for $R=200$ and most likely even higher values for $R > 200$.
	
	\begin{figure}[!htb]
		\centering
		\includegraphics[width=.6\textwidth]{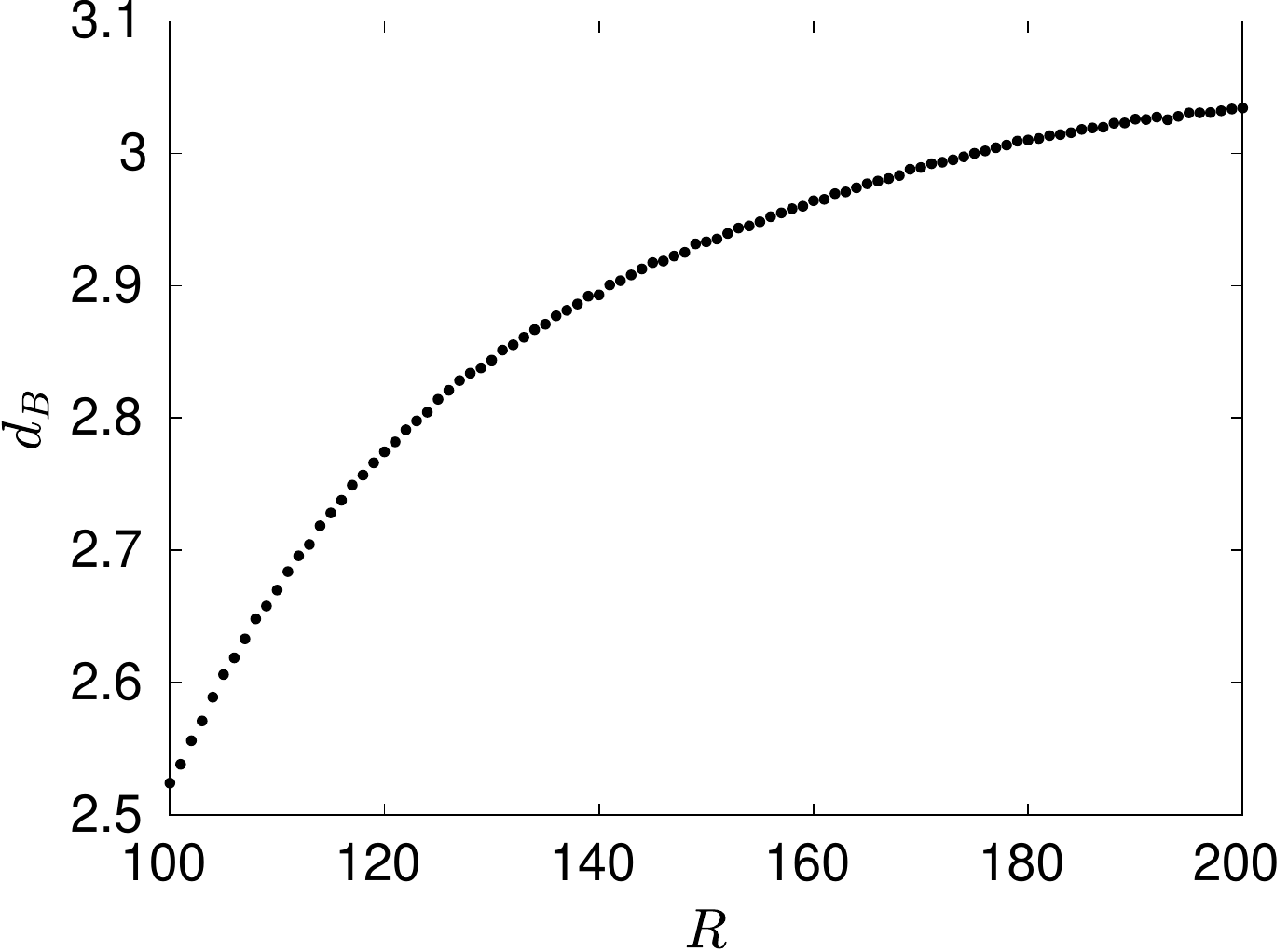}
		\caption{Box-counting dimension of the attracting set of the nine-dimensional model for turbulent shear flows depending on the Reynolds number $R$.}
		\label{fig:bcd_9d}
	\end{figure}
	
	\section{Conclusions}\label{sec:conclusion}
	The set-oriented path following method is a powerful tool for the numerical analysis of global bifurcations for attracting sets.
	With the method developed in this paper the actual covering of an attracting set
	for a parameter value $\lambda$ can be used as an initial covering for a parameter $\bar \lambda$ which is sufficiently close to $\lambda$.  Thus, we do not need to restart the subdivision method, thereby significantly reducing the overall computational time. However, this path following method is only defined for finite dimensional dynamical systems. 
	Therefore, it should be desirable to extend this work to the infinite dimensional setting, i.e., for the parameter dependent computation of attractors of infinite dimensional dynamical systems. In this context the results of \cite{DHZ16,ZDG18,Zie18} would have to be adapted.
	
	The results in this article show that set-oriented methods can also be used in moderate-dimensional settings, where they can provide insight into the dynamics of shear flows during the transition to turbulence in simple models. 
	The similarities in the behaviour of the low-dimensional models and fully resolved numerical simulations suggests that the global organization of phase space will be similar, even though such a study is computationally out of reach for high-dimensional attractors. 
	
	
	\section*{Acknowledgments} This work is supported by the Priority Programme SPP 1881 Turbulent Superstructures of the Deutsche Forschungsgemeinschaft.

\bibliographystyle{plain}
\bibliography{Path_following}

\begin{thebibliography}{10}

\bibitem{AG93}
E.~L. Allgower and K.~Georg.
\newblock Continuation and path following.
\newblock {\em Acta numerica}, 2:1--64, 1993.

\bibitem{AG97}
E.~L. Allgower and K.~Georg.
\newblock Numerical path following.
\newblock {\em Handbook of numerical analysis}, 5:3--207, 1997.

\bibitem{Avila:2013jq}
M.~Avila, F.~Mellibovsky, N.~Roland, and B.~Hof.
\newblock {Streamwise-Localized Solutions at the Onset of Turbulence in Pipe
  Flow}.
\newblock {\em Phys Rev Lett}, 110(22):224502, May 2013.

\bibitem{Chandrasekhar61}
S.~Chandrasekhar.
\newblock {\em Hydrodynamic and hydromagnetic stability}.
\newblock Oxford University Press, Oxford, 1961.

\bibitem{Chossat94}
P.~Chossat and G.~Iooss.
\newblock {\em The {C}ouette-{T}aylor {P}roblem}.
\newblock Springer-Verlag, 1994.

\bibitem{CB97}
RM~Clever and Fritz~H Busse.
\newblock Tertiary and quaternary solutions for plane couette flow.
\newblock {\em Journal of Fluid Mechanics}, 344:137--153, 1997.

\bibitem{DS13}
H.~Dankowicz and F.~Schilder.
\newblock {\em Recipes for continuation}, volume~11 of {\em Computational
  Science and Engineering}.
\newblock SIAM, 2013.

\bibitem{WGK+12}
V.~De~Witte, W.~Govaerts, Y.~A. Kuznetsov, and M.~Friedman.
\newblock Interactive initialization and continuation of homoclinic and
  heteroclinic orbits in matlab.
\newblock {\em ACM Transactions on Mathematical Software (TOMS)}, 38(3):18,
  2012.

\bibitem{DFJ01}
M.~Dellnitz, G.~Froyland, and O.~Junge.
\newblock The algorithms behind {GAIO} --- {S}et oriented numerical methods for
  dynamical systems.
\newblock In {\em Ergodic {T}heory, {A}nalysis, and {E}fficient {S}imulation of
  {D}ynamical {S}ystems}, pages 145--174. Springer-Verlag, 2001.

\bibitem{DGM+05}
M.~Dellnitz, K.~A. Grubits, J.~E. Marsden, K.~Padberg, and B.~Thiere.
\newblock Set oriented computation of transport rates in 3-degree of freedom
  systems: the {R}ydberg atom in crossed fields.
\newblock {\em Regular and {C}haotic {D}ynamics}, 10(2):173--192, 2005.

\bibitem{DHZ16}
M.~Dellnitz, M.~{Hessel-von Molo}, and A.~Ziessler.
\newblock On the computation of attractors for delay differential equations.
\newblock {\em Journal of Computational Dynamics}, 3(1):93--112, 2016.

\bibitem{DH96}
M.~Dellnitz and A.~Hohmann.
\newblock The {C}omputation of {U}nstable {M}anifolds using {S}ubdivision and
  {C}ontinuation.
\newblock In {\em Nonlinear {D}ynamical {S}ystems and {C}haos}, pages 449--459.
  Springer, 1996.

\bibitem{DH97}
M.~Dellnitz and A.~Hohmann.
\newblock A subdivision algorithm for the computation of unstable manifolds and
  global attractors.
\newblock {\em Numerische Mathematik}, 75:293--317, 1997.

\bibitem{DJ99}
M.~Dellnitz and O.~Junge.
\newblock On the approximation of complicated dynamical behavior.
\newblock {\em SIAM Journal on Numerical Analysis}, 36(2):491--515, 1999.

\bibitem{DJ05}
M.~Dellnitz and O.~Junge.
\newblock Set oriented numerical methods in space mission design.
\newblock {\em Modern {A}strodynamics}, 1:127--153, 2006.

\bibitem{DJK+05}
M.~Dellnitz, O.~Junge, W.~S. Koon, F.~Lekien, M.~W. Lo, J.~E. Marsden,
  K.~Padberg, R.~Preis, S.~D. Ross, and B.~Thiere.
\newblock Transport in dynamical astronomy and multibody problems.
\newblock {\em International Journal of Bifurcation and Chaos},
  15(03):699--727, 2005.

\bibitem{DJLMPPRT05}
M.~Dellnitz, O.~Junge, M.~Lo, J.~E. Marsden, K.~Padberg, R.~Preis, S.~Ross, and
  B.~Thiere.
\newblock Transport of {M}ars-crossing asteroids from the quasi-{H}ilda region.
\newblock {\em Physical {R}eview {L}etters}, 94(23):231102, 2005.

\bibitem{DKZ17}
M.~Dellnitz, S.~Klus, and A.~Ziessler.
\newblock A set-oriented numerical approach for dynamical systems with
  parameter uncertainty.
\newblock {\em SIAM Journal on Applied Dynamical Systems}, 16(1):120--138,
  2017.

\bibitem{DDO+99}
P.~Deuflhard, M.~Dellnitz, O.~Junge, and C.~Sch{\"u}tte.
\newblock Computation of essential molecular dynamics by subdivision
  techniques.
\newblock In {\em {C}omputational {M}olecular {D}ynamics: {C}hallenges,
  {M}ethods, {I}deas}, pages 98--115. Springer-Verlag, 1999.

\bibitem{DGK03}
A.~Dhooge, W.~Govaerts, and Y.~A. Kuznetsov.
\newblock Matcont: a matlab package for numerical bifurcation analysis of odes.
\newblock {\em ACM Transactions on Mathematical Software (TOMS)},
  29(2):141--164, 2003.

\bibitem{Doe81}
E.~J. Doedel.
\newblock Auto: A program for the automatic bifurcation analysis of autonomous
  systems.
\newblock {\em Congr. Numer}, 30:265--284, 1981.

\bibitem{DF89}
E.~J. Doedel and M.~J. Friedman.
\newblock Numerical computation of heteroclinic orbits.
\newblock {\em Journal of Computational and Applied Mathematics},
  26(1-2):155--170, 1989.

\bibitem{Eckhardt:2007ix}
B.~Eckhardt.
\newblock {Turbulence transition in pipe flow: some open questions}.
\newblock {\em Nonlinearity}, 21(1):T1--T11, December 2007.

\bibitem{Eckhardt:2007ka}
B.~Eckhardt, T.~M. Schneider, B.~Hof, and J.~Westerweel.
\newblock {Turbulence Transition in Pipe Flow}.
\newblock {\em Annu Rev Fluid Mech}, 39(1):447--468, January 2007.

\bibitem{Eckhardt:2018js}
Bruno Eckhardt.
\newblock {Transition to turbulence in shear flows}.
\newblock {\em Physica A}, 504:121--129, August 2018.

\bibitem{Fal13}
K.~Falconer.
\newblock {\em Fractal geometry: mathematical foundations and applications}.
\newblock John Wiley \& Sons, 2013.

\bibitem{FD03}
G.~Froyland and M.~Dellnitz.
\newblock Detecting and locating near-optimal almost invariant sets and cycles.
\newblock {\em SIAM Journal on Scientific Computing}, 24(6):1839--1863, 2003.

\bibitem{FHRSSG12}
G.~Froyland, C.~Horenkamp, V.~Rossi, N.~Santitissadeekorn, and A.~{Sen Gupta}.
\newblock Three-dimensional characterization and tracking of an {A}gulhas ring.
\newblock {\em Ocean Modelling}, 52-53:69--75, 2012.

\bibitem{FLS10}
Gary Froyland, Simon Lloyd, and Naratip Santitissadeekorn.
\newblock Coherent sets for nonautonomous dynamical systems.
\newblock {\em Physica D: Nonlinear Phenomena}, 239:1527--1541, 2010.

\bibitem{Gibson:2009kp}
J.~F. Gibson, J.~Halcrow, and P.~Cvitanovi{\'c}.
\newblock {Equilibrium and travelling-wave solutions of plane Couette flow}.
\newblock {\em J Fluid Mech}, 638:243, September 2009.

\bibitem{GS03}
M.~Golubitsky and I.~Stewart.
\newblock {\em The symmetry perspective: from equilibrium to chaos in phase
  space and physical space}, volume 200.
\newblock Springer Science \& Business Media, 2003.

\bibitem{GSS12}
M.~Golubitsky, I.~Stewart, and D.~G. Schaeffer.
\newblock {\em Singularities and groups in bifurcation theory}, volume~2.
\newblock Springer Science \& Business Media, 2012.

\bibitem{Grossmann:2000}
S.~Grossmann.
\newblock {The onset of shear flow turbulence}.
\newblock {\em Rev Mod Phys}, 72(2):603--618, April 2000.

\bibitem{GH83}
J.~Guckenheimer and P.~Holmes.
\newblock Local bifurcations.
\newblock In {\em Nonlinear oscillations, dynamical systems, and bifurcations
  of vector fields}, pages 117--165. Springer, 1983.

\bibitem{Halcrow:2009jx}
J.~Halcrow, J.~F. Gibson, P.~Cvitanovi{\'c}, and D.~Viswanath.
\newblock {Heteroclinic connections in plane Couette flow}.
\newblock {\em J Fluid Mech}, 621:365, February 2009.

\bibitem{HK12}
J.~K. Hale and H.~Ko{\c{c}}ak.
\newblock {\em Dynamics and bifurcations}, volume~3.
\newblock Springer Science \& Business Media, 2012.

\bibitem{HG89}
Jack~K Hale and Genevi{\`e}ve Raugel.
\newblock Lower semicontinuity of attractors of gradient systems and
  applications.
\newblock {\em Annali di Matematica pura ed applicata}, 154(1):281--326, 1989.

\bibitem{Hof:2006ab}
B.~Hof, J.~Westerweel, T.~M. Schneider, and B.~Eckhardt.
\newblock {Finite lifetime of turbulence in shear flows.}
\newblock {\em Nature}, 443(7107):59--62, September 2006.

\bibitem{Kreilos:2012bd}
T.~Kreilos and B.~Eckhardt.
\newblock {Periodic orbits near onset of chaos in plane Couette flow}.
\newblock {\em Chaos}, 22(4):047505, 2012.

\bibitem{Kreilos:2014ew}
T.~Kreilos, B.~Eckhardt, and T.~M. Schneider.
\newblock {Increasing Lifetimes and the Growing Saddles of Shear Flow
  Turbulence}.
\newblock {\em Phys Rev Lett}, 112(4):044503, January 2014.

\bibitem{Lo63}
E.~N. Lorenz.
\newblock Deterministic nonperiodic flow.
\newblock {\em Journal of the {A}tmospheric {S}ciences}, 20(2):130--141, 1963.

\bibitem{MFE04}
J.~Moehlis, H.~Faisst, and B.~Eckhardt.
\newblock A low-dimensional model for turbulent shear flows.
\newblock {\em New Journal of Physics}, 6(1):56, 2004.

\bibitem{MFE05}
J.~Moehlis, H.~Faisst, and B.~Eckhardt.
\newblock Periodic orbits and chaotic sets in a low-dimensional model for shear
  flows.
\newblock {\em SIAM Journal on Applied Dynamical Systems}, 4(2):352--376, 2005.

\bibitem{Nagata:1990ae}
M.~Nagata.
\newblock {Three-dimensional finite-amplitude solutions in plane Couette flow :
  bifurcation from infinity}.
\newblock {\em J Fluid Mech}, 217:519--527, August 1990.

\bibitem{Schmid99}
P.~J. Schmid and D.~S. Henningson.
\newblock {\em Stability and Transition of Shear Flows}.
\newblock Springer, New York, 1999.

\bibitem{SHD01}
C.~Sch\"utte, W.~Huisinga, and P.~Deuflhard.
\newblock Transfer operator approach to conformational dynamics in biomolecular
  systems.
\newblock In {\em Ergodic {T}heory, {A}nalysis, and {E}fficient {S}imulation of
  {D}ynamical {S}ystems}, pages 191--223. Springer-Verlag, 2001.

\bibitem{Wal95a}
F.~Waleffe.
\newblock Hydrodynamic stability and turbulence: Beyond transients to a
  self-sustaining process.
\newblock {\em Studies in applied mathematics}, 95(3):319--343, 1995.

\bibitem{Wal95b}
F.~Waleffe.
\newblock Transition in shear flows. nonlinear normality versus non-normal
  linearity.
\newblock {\em {Physics of Fluids}}, 7(12):3060--3066, 1995.

\bibitem{Wig13}
S.~Wiggins.
\newblock {\em Global bifurcations and chaos: analytical methods}, volume~73.
\newblock Springer Science \& Business Media, 2013.

\bibitem{Zammert:2015jg}
S.~Zammert and B.~Eckhardt.
\newblock {Crisis bifurcations in plane Poiseuille flow}.
\newblock {\em Phys Rev E}, 91(4):041003, April 2015.

\bibitem{Zie18}
A.~Ziessler.
\newblock {\em Analysis of Infinite Dimensional Dynamical Systems by
  Set-Oriented Numerics}.
\newblock PhD thesis, Paderborn University, 2018.

\bibitem{ZDG18}
A.~Ziessler, M.~Dellnitz, and R.~Gerlach.
\newblock The numerical computation of unstable manifolds for infinite
  dimensional dynamical systems by embedding techniques.
\newblock {\em arXiv preprint arXiv:1808.08787}, 2018.

\end{thebibliography}
\end{document}